\newtheorem{thm}{Theorem}[section]
\newtheorem{prop}[thm]{Proposition}
\newtheorem{dfn}[thm]{Definition}
\newtheorem{oss}[thm]{Remark}
\def\RR{{\mathbb{R}}}
\def\NN{{\mathbb{N}}}
\newcommand{\eps}{\varepsilon}
\journal{Journal Name}
\begin{document}

\begin{frontmatter}


\title{Superexponential stabilizability of evolution equations of parabolic type via bilinear control\footnote{This paper was partly supported by the INdAM National Group for Mathematical Analysis, Probability and their Applications.}}



\author{Fatiha Alabau Boussouira}
\address{Laboratoire Jacques-Louis Lions Sorbonne Universit\'{e}, Universit\'{e} de Lorraine, 75005, Paris, France

alabau@ljll.math.upmc.fr}
\author{Piermarco Cannarsa\footnote{This author acknowledges support from the MIUR Excellence Department Project awarded to the Department of Mathematics, University of Rome Tor Vergata, CUP E83C18000100006.}} 
\address{Dipartimento di Matematica, Universit\`{a} di Roma Tor Vergata, 00133, Roma, Italy 

cannarsa@mat.uniroma2.it}
\author{Cristina Urbani\footnote{This author is grateful to University Italo Francese (Vinci Project 2018).}}
\address{Gran Sasso Science Institute, 67100, L'Aquila, Italy

Laboratoire Jacques-Louis Lions, Sorbonne Universit\'{e}, 75005, Paris, France

 cristina.urbani@gssi.it}

\begin{abstract}
We prove rapid stabilizability to the ground state solution for a class of abstract parabolic equations of the form
\begin{equation*}
u'(t)+Au(t)+p(t)Bu(t)=0,\qquad t\geq0
\end{equation*}
where the operator $-A$ is a self-adjoint accretive operator on a Hilbert space and $p(\cdot)$ is the control function. The proof is based on a linearization argument. We prove that the linearized system is exacly controllable and we apply the moment method to build a control $p(\cdot)$ that steers the solution to the ground state in finite time. Finally, we use such a control to bring the solution of the nonlinear equation arbitrarily close to the ground state solution with doubly exponential rate of convergence.

We give several applications of our result to different kinds of parabolic equations.
\end{abstract}

\begin{keyword}
bilinear control \sep evolution equations \sep analytic semigroup \sep moment method

\MSC[2010] 35Q93 \sep 93C25 \sep 93C10 \sep 35K10

\end{keyword}

\end{frontmatter}


\section{Introduction}
In the field of control theory of dynamical systems a huge amount of works is devoted to the study of models in which the control enters as an additive term (boundary or internal locally distributed controls), see, for instance, the books \cite{lions1}, \cite{lions2} by J.L. Lions. On the other hand, these kinds of control systems are not suitable to describe processes that change their physical characteristics for the presence of the control action. This issue is quite common for the so-called \emph{smart materials} and in many biomedical, chemical and nuclear chain reactions. Indeed, under the process of \emph{catalysis} some materials are able to change their principal parameters (see the examples in \cite{k} for more details).

To deal with these situations, an important role in control theory is played by multiplicative controls that appear in the equations as coefficients. 

Due to a weaker control action, exact controllability results are not to be expected with multiplicative controls. On the other hand, approximate controllability has been obtained for different types of initial/target conditions. For instance, in \cite{k2} the author proved a result of non-negative approximate controllability of the $1D$ semilinear parabolic equation.  In \cite{k3}, the same author proved approximate and exact null controllability for a bilinear parabolic system with the reaction term satisfying Newton's law. Paper \cite{f} is devoted to the study of global approximate multiplicative controllability for nonlinear degenerate parabolic problems. In \cite{cfk} and \cite{ck}, results of approximate controllability of a one dimensional reaction-diffusion equation via multiplicative control and with sign changing data are proved.

An even more specific and weaker class of controls are bilinear controls which enter the equation as scalar functions of time as, for instance, in the following system
\begin{equation}\label{intr1}
\left\{
\begin{array}{ll}
u'(t)+Au(t)+p(t)Bu(t)=0,& t>0\\
u(0)=u_0.
\end{array}\right.
\end{equation}
A fundamental result in control theory for this type of evolution equations is the one due to Ball, Marsden and Slemrod \cite{bms} which establishes that the system \eqref{intr1} is not controllable. Indeed, if $u(t;p,u_0)$ denotes the unique solution of \eqref{intr1}, then the attainable set from $u_0$ defined by
\begin{equation*}
S(u_0)=\{ u(t;p,u_0);t\geq 0, p\in L^r_{loc}([0,+\infty),\RR),r>1\}
\end{equation*}
has a dense complement.

On the other hand, when $B$ is unbounded, the possibility of proving a positive controllability result remains open. This idea of exploiting the unboundness of the operator $B$ was developed by Beauchard and Laurent in \cite{bl} for the Schr{\"o}dinger equation
\begin{equation}\label{intr1.1}\left\{\begin{array}{ll}
iu_t(t,x)+u_{xx}(t,x)+p(t)\mu(x)u(t,x)=0,&(t,x)\in(0,T)\times(0,1)\\
u(t,0)=u(t,1)=0.
\end{array}\right.
\end{equation}
For such an equation the authors proved the local exact controllability around the ground state in a stronger topology than the natural one of $X=H^2\cap H^1_0(0,1)$ for which the multiplication operator $Bu(t,x)=\mu(x)u(t,x)$ is unbounded. In other terms, the above result could be regarded as a description of the attainable set from an initial submanifold of the original Banach space.

Following the same strategy, Beauchard in \cite{b} studied the wave equation
\begin{equation*}\left\{\begin{array}{ll}
u_{tt}(t,x)-u_{xx}(t,x)-p(t)\mu(x)u(t,x)=0,&(t,x)\in(0,T)\times(0,1)\\
u_x(t,0)=u(t,1)=0
\end{array}\right.
\end{equation*}
showing that for $T>2$ the system is locally controllable in a stronger topology than the natural one for this problem and for which the operator $Bu(t,x)=\mu(x)u(t,x)$ is unbounded.

In both papers \cite{b} and \cite{bl} a key point of the analysis is the application of the inverse mapping theorem which is made possible by the controllability of the linearized problem. This is the reason why, for parabolic problems, the above strategy meets an obstruction: the spaces for which one can prove controllability of the linearized equation are not well-adapted to the use of the inverse map technique.

We recall that some results of approximate controllability of hyperbolic equations with bilinear control have been achieved (see, for instance, \cite{cmsb}).

For other nonlinear equations in fluid dynamics, namely the Navier-Stokes equations with additive controls, the exact controllability to the uncontrolled solution of the equations was shown by Fern{\'a}ndez-Cara, Guerrero, Imanuvilov and Puel in \cite{fgip}.

In this paper, we are interested in studying the possibility of steering the solution of \eqref{intr1}, with a bilinear control, to a specific uncontrolled trajectory of the equation, namely the ground state solution.

To be more precise, let $X$ be a separable Hilbert space, $A:D(A)\subset X\to X$ be a self-adjoint accretive operator with compact resolvent (see section \ref{prelim} for more on notation and assumptions) and let $\{\lambda_k\}_{k\in\NN^*}$ be the eigenvalues of $A$, $(\lambda_k\leq\lambda_{k+1},\,\,\forall k\in\NN^*)$, with associated eigenfunctions $\{\varphi_k\}_{k\in\NN^*}$. Since it is customary to call $\varphi_1$ the \emph{ground state} of $A$, we refer to the function $\psi_1(t)=e^{-\lambda_1t}\varphi_1$ as the \emph{ground state solution}.

Our main result (Theorem \ref{ta1} below) ensures that, if $\{\lambda_k\}_{k\in\NN^*}$ satisfy a suitable gap condition (see condition \eqref{gap}) and $B$ spreads the ground state in all directions (see condition \eqref{a2}), then system \eqref{intr1} is locally stabilizable to $\psi_1$ at superexponential rate, that is, one can find a control $p\in L^2_{loc}(0,\infty)$ such that the corresponding solution $u(\cdot)$ of \eqref{intr1} satisfies
\begin{equation}\label{intr2}
\log{||u(t)-\psi_1(t)||}\leq C-e^{\omega t},\qquad\forall t>0,
\end{equation}
for suitable constants $C,\omega>0$.

An important point to underline is that our approach --- based on the moment method for the linearized system --- is fully constructive. First, we use the gap condition \eqref{gap} to build a biorthogonal family $\{\sigma_k(t)\}_{k\in\NN^*}$ to the exponentials $e^{\lambda_kt}$. Then, we apply such a family to construct a control $p(\cdot)$ that steers the linearized system of \eqref{intr1} exactly to the ground state solution in finite time. Finally, we repeatedly apply such exact controls for the linearized system in order to build a control $p(\cdot)$ for \eqref{intr1} which achieves \eqref{intr2}.

We point out that our method applies to both cases $\lambda_1=0$ and $\lambda_1>0$, giving an even faster decay rate in the latter case.

The above stabilizability result can be used to study several classes of parabolic problems, for which checking the validity of the assumptions on $A$ and $B$ is usually straightforward. For instance, we can treat the heat equation with a controlled source term of the form
\begin{equation*}
u_t-u_{xx}+p(t)\mu(x)u=0
\end{equation*}
with Dirichlet or Neumann boundary conditions, as well as operators with variable coefficients
\begin{equation*}
u_t-((1+x)^2u_x)_x+p(t)\mu(x)u=0,
\end{equation*}
or even $3D$ problems with radial data symmetry such as
\begin{equation*}
u_t-\Delta u+p(t)\mu(|x|)u=0.
\end{equation*}

Furthermore, we believe that the method we develop in this paper has potentials to be adapted to more general problems, such as a possibly unbounded operator $B$ and a degenerate principal part $A$.

The outline of the paper is the following. In section \ref{prelim}, we introduce the notation and the preliminary assumptions on the data. Section \ref{main} is devoted to our main result and its proof. Finally, in section \ref{examples} we give applications to several examples of parabolic problems.

\section{Preliminaries}\label{prelim}
Let $(X,\langle\cdot,\cdot\rangle)$ be a separable Hilbert space. We denote by $||\cdot||$ the associated norm on $X$.

Let $A:D(A)\subset X\to X$ be a densely defined linear operator with the following properties:
\begin{equation}\label{ip}
\begin{array}{ll}
(a) & A \mbox{ is self-adjoint},\\
(b) & A \mbox{ is accretive: }\langle Ax,x\rangle \geq 0,\,\, \forall x\in D(A),\\
(c) &\exists\,\lambda>0 \mbox{ such that }(\lambda I+A)^{-1}:X\to X \mbox{ is compact}.
\end{array}
\end{equation}

We recall that under the above assumptions $A$ is a closed operator and $D(A)$ is itself a Hilbert space with the scalar product
\begin{equation*} 
(x|y)_{D(A)}=\langle x,y\rangle+\langle Ax,Ay\rangle,\qquad\forall x,y \in D(A).
\end{equation*}
Moreover, $-A$ is the infinitesimal generator of a strongly continuous semigroup of contractions on $X$ which will be denoted by $e^{-tA}$. Furthermore, $e^{-tA}$ is analytic.

In view of the above assumptions, there exists an orthonormal basis $\{\varphi_k\}_{k\in\NN^*}$ in $X$ of eigenfunctions of $A$, that is, $\varphi_k\in D(A)$ and $A\varphi_k=\lambda_k\varphi_k$ $\forall k \in \NN^*$, where $\{\lambda_k\}_{k\in\NN^*}\subset \RR$ denote the corresponding eigenvalues. We recall that $\lambda_k\geq 0$, $\forall k\in\NN^*$ and we suppose --- without loss of generality --- that $\{\lambda_k\}_{k\in\NN^*}$ is ordered so that $0\leq\lambda_k\leq\lambda_{k+1}\to \infty$ as $k\to\infty$.
The associated semigroup has the following representation
\begin{equation}\label{semigr}
e^{-tA}\varphi=\sum_{k=1}^\infty\langle \varphi,\varphi_k\rangle e^{-\lambda_k t}\varphi_k,\quad\forall \varphi\in X.
\end{equation}

For any $s\geq 0$, we denote by $A^s:D(A^s)\subset X\to X$ the fractional power of $A$ (see \cite{p}). Under our assumptions, such a linear operator is characterized as follows
\begin{equation}
\begin{array}{l}
D(A^s)=\left\{x\in X ~\left|~ \sum_{k\in\NN^*}\lambda_k^{2s}|\langle x,\varphi_k\rangle|^2<\infty\right.\right\}\\\\
A^{s}x=\sum_{k\in\NN^*}\lambda_k^{s}\langle x,\varphi_k\rangle\varphi_k,\qquad \forall x\in D(A^s).
\end{array}
\end{equation}

Let $T>0$ and consider the problem
\begin{equation}
\left\{
\begin{array}{ll}\label{011}
u'(t)+Au(t)=f(t),&t\in[0,T]\\
u(0)=u_0
\end{array}
\right.
\end{equation}
where $u_0\in X$ and $f\in L^2(0,T;X)$. We now recall two definitions of solution of problem \eqref{011}:
\begin{itemize}
\item the function $u\in C([0,T], X)$ defined by $$u(t)=e^{-tA}u_0+\int_0^t e^{-(t-s)A}f(s)ds$$ is called the \emph{mild solution} of \eqref{011},
\item $u$ is a \emph{strong solution} of \eqref{011} in $L^2(0,T;X)$ if there exists a sequence $\{u_k\}\subseteq H^1(0,T;X)\cap L^2(0,T;D(A))$ such that
\begin{equation*}
\begin{array}{c}
u_k\to u, \mbox{  and  } u'_k-Au_k\to f \mbox{  in  }L^2(0,T;X),\\\\
u_k(0)\to u_0\mbox{  in  } X, \mbox{  as  } k\to \infty.
\end{array}
\end{equation*}
\end{itemize}

The well-posedness of the Cauchy problem \eqref{011} is a classical result (see, for instance, \cite{bd}).
\begin{thm}\label{wellpos}
Let $u_0\in X$ and $f\in L^2(0,T;X)$. Under hypothesis \eqref{ip}, problem \eqref{011} has a unique strong solution in $L^2(0,T;X)$. Moreover $u$ belongs to $C([0,T];X)$ and is given by the formula
\begin{equation}\label{013}
u(t)=e^{-tA}u_0+\int_0^t e^{-(t-s)A}f(s)ds.
\end{equation}
Furthermore, there exists a constant $C_0(T)>0$ such that
\begin{equation}\label{014}
\sup_{t\in[0,T]}||u(t)||\leq C_0(T)\left(||u_0||+||f||_{L^2(0,T;X)}\right)
\end{equation}
and $C_0(T)$ is non decreasing with respect to $T$.
\end{thm}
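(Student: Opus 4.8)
The plan is to establish the three assertions of Theorem \ref{wellpos} --- existence and uniqueness of a strong solution, the representation formula \eqref{013}, and the estimate \eqref{014} --- by reducing the problem to the semigroup framework already set up in the preliminaries. First I would prove well-posedness by the standard density argument. Since $-A$ generates the analytic (in particular, strongly continuous) contraction semigroup $e^{-tA}$, the classical theory of linear evolution equations (e.g.\ \cite{p}, \cite{bd}) gives that for $u_0\in D(A)$ and $f\in H^1(0,T;X)$ the function defined by \eqref{013} lies in $H^1(0,T;X)\cap L^2(0,T;D(A))$ and is a genuine (hence strong) solution of \eqref{011}. For general $u_0\in X$ and $f\in L^2(0,T;X)$, I would pick sequences $u_0^k\in D(A)$ with $u_0^k\to u_0$ in $X$ and $f_k\in H^1(0,T;X)$ with $f_k\to f$ in $L^2(0,T;X)$, let $u_k$ be the corresponding regular solutions, and verify that $\{u_k\}$ is a sequence of the type required in the definition of strong solution. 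The convergences $u_k\to u$ in $L^2(0,T;X)$ and $u_k(0)=u_0^k\to u_0$ in $X$ will follow once the a priori bound \eqref{014} is available for the regular solutions, since then $u_k-u_j$ is controlled by $\|u_0^k-u_0^j\|+\|f_k-f_j\|_{L^2(0,T;X)}$; and $u_k'-Au_k=f_k\to f$ in $L^2(0,T;X)$ by construction. Uniqueness is immediate from the same a priori estimate applied to the difference of two strong solutions (with zero data).

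The heart of the matter is therefore the estimate \eqref{014}, which I would obtain directly from the representation \eqref{013}. For the first term, $\|e^{-tA}u_0\|\leq\|u_0\|$ because $e^{-tA}$ is a contraction semigroup (assumption \eqref{ip}(b)). For the Duhamel term, I would estimate
\begin{equation*}
\left\|\int_0^t e^{-(t-s)A}f(s)\,ds\right\|\leq\int_0^t\|e^{-(t-s)A}f(s)\|\,ds\leq\int_0^t\|f(s)\|\,ds\leq\sqrt{T}\,\|f\|_{L^2(0,T;X)}
\end{equation*}
using again the contraction property and then Cauchy--Schwarz on $[0,t]\subseteq[0,T]$. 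Combining the two bounds gives \eqref{014} with $C_0(T)=\max\{1,\sqrt{T}\}$, which is visibly non-decreasing in $T$. (Alternatively, and equivalently, one can read off $C_0(T)$ from the spectral representation \eqref{semigr}: writing $u_0=\sum_k\langle u_0,\varphi_k\rangle\varphi_k$ and $f(s)=\sum_k f_k(s)\varphi_k$, one has $u(t)=\sum_k\big(e^{-\lambda_k t}\langle u_0,\varphi_k\rangle+\int_0^t e^{-\lambda_k(t-s)}f_k(s)\,ds\big)\varphi_k$, and since $\lambda_k\geq0$ each factor $e^{-\lambda_k(\cdot)}$ is bounded by $1$, yielding the same conclusion by Parseval.)

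The continuity $u\in C([0,T];X)$ follows because $t\mapsto e^{-tA}u_0$ is continuous by strong continuity of the semigroup, and $t\mapsto\int_0^t e^{-(t-s)A}f(s)\,ds$ is continuous for $f\in L^2(0,T;X)$ by a routine argument: split $\int_0^{t+h}-\int_0^t$ into the tail $\int_t^{t+h}e^{-(t+h-s)A}f(s)\,ds$, bounded by $\int_t^{t+h}\|f\|\to0$, and the main part $\int_0^t(e^{-(t+h-s)A}-e^{-(t-s)A})f(s)\,ds$, which tends to $0$ by dominated convergence using strong continuity of $e^{-\tau A}$ in $\tau$ together with the uniform bound $\|e^{-(t+h-s)A}-e^{-(t-s)A}\|\leq2$. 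I do not expect any genuine obstacle here: the only point requiring care is to make sure the regularization in the first step produces a sequence satisfying \emph{exactly} the membership and convergence requirements in the definition of strong solution, and that the a priori estimate is proved for the regular approximants \emph{before} being passed to the limit --- but this is the usual order of business and causes no difficulty given the contraction and analyticity properties of $e^{-tA}$ recorded above.
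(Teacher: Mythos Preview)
Your argument is correct and follows the standard route: contraction property of the semigroup for the homogeneous part, Cauchy--Schwarz on the Duhamel integral, and a density argument to pass from regular data to general $u_0\in X$, $f\in L^2(0,T;X)$. The explicit constant $C_0(T)=\max\{1,\sqrt T\}$ is fine and indeed non-decreasing.

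However, there is nothing to compare against: the paper does not prove this theorem. It states it as a classical result and refers the reader to \cite{bd}. So your proposal is not so much an alternative proof as a self-contained supplement where the paper chose to quote the literature. If you intend to include it, you might simply note at the outset that you are reproducing the classical argument for completeness; otherwise a citation suffices, as the authors do.
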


Given $T>0$, we consider the bilinear control problem
\begin{equation}\label{a1}\left\{
\begin{array}{ll}
u'(t)+Au(t)+p(t)Bu(t)=0,& t\in [0,T]\\
u(0)=u_0
\end{array}\right.
\end{equation}
where $u$ is the state variable and $p\in L^2(0,T)$ is the control function and the bilinear stabilizability problem
\begin{equation}\label{stab}\left\{
\begin{array}{ll}
u'(t)+Au(t)+p(t)Bu(t)=0,& t>0\\
u(0)=u_0
\end{array}\right.
\end{equation}
with $p\in L^2_{loc}([0,+\infty))$.

We recall that, in general, the exact controllability problem for system \eqref{a1} has a negative answer as shown by Ball, Marsden and Slemrod in \cite{bms}.


\section{Main result}\label{main}
We are interested in studying the stabilizability of system \eqref{stab} to a fixed trajectory. Let $X$ be a Hilbert space equipped with the scalar product $\langle\cdot,\cdot\rangle$. We denote by $||\cdot||=\sqrt{\langle\cdot,\cdot\rangle}$ the associated norm and by $B_R(\varphi)$ the open ball of radius $R>0$, centered in $\varphi\in X$. Given an initial condition $u_0\in X$ and a control $p\in L^2_{loc}([0,+\infty))$, we denote by $u(\cdot;u_0,p):[0,+\infty)\to X$ the corresponding solution of \eqref{stab}. 
\begin{dfn}
Given an initial condition $\bar{u}_0\in X$ and a control $\bar{p}\in L^2_{loc}([0,+\infty))$, we say that the control system \eqref{stab} is \emph{locally stabilizable to $\bar{u}(\cdot;\bar{u}_0,\bar{p})$} if there exists $\delta>0$ such that, for every $u_0\in B_\delta(\bar{u}_0)$, there exists a control $p\in L^2_{loc}([0,+\infty))$ such that
$$\lim_{t\to+\infty}||u(t;u_0,p)-\bar{u}(t;\bar{u}_0,\bar{p})||=0.$$
\end{dfn}
\begin{dfn}
Given an initial condition $\bar{u}_0\in X$ and a control $\bar{p}\in L^2_{loc}([0,+\infty))$, we say that the control system \eqref{stab} is \emph{locally exponentially stabilizable to $\bar{u}(\cdot;\bar{u}_0,\bar{p})$} if for any $\rho>0$, there exists $R(\rho)>0$ such that, for every $u_0\in B_{R(\rho)}(\bar{u}_0)$, there exists a control $p\in L^2_{loc}([0,+\infty))$ and a constant $M>0$ such that
$$||u(t;u_0,p)-\bar{u}(t;\bar{u}_0,\bar{p})||\leq M e^{-\rho t},\qquad\forall t>0.$$
\end{dfn}
\begin{dfn}
Given an initial condition $\bar{u}_0\in X$ and a control $\bar{p}\in L^2_{loc}([0,+\infty))$, we say that the control system \eqref{stab} is \emph{locally superexponentially stabilizable to $\bar{u}(\cdot;\bar{u}_0,\bar{p})$} if for any $\rho>0$ there exists $R(\rho)>0$ such that, for every $u_0\in B_{R(\rho)}(\bar{u}_0)$, there exists a control $p\in L^2_{loc}([0,+\infty))$ such that
$$||u(t;u_0,p)-\bar{u}(t;\bar{u}_0,\bar{p})||\leq M e^{-\rho e^{\omega t}},\qquad\forall t>0,$$
where $M,\omega>0$ are suitable constants depending only on $A$ and $B$.
\end{dfn}

For any $j\in\NN^*$ we set $\psi_j(t)=e^{-\lambda_j t}\varphi_j$ and we call $\psi_1$ the \emph{ground state solution}. Observe that $\psi_j$ solves \eqref{stab} with $p=0$ and $u_0=\varphi_j$. We shall study the superexponential stabilizability of \eqref{stab} to the trajectory $\psi_1$.

We observe that if there exists $\nu>0$ such that $\langle Ax,x\rangle\geq\nu||x||^2$, for all $x\in D(A)$, then the semigroup generated by $-A$ satisfies
\begin{equation*}
||e^{-tA}||\leq e^{-\nu t},\qquad \forall t>0.
\end{equation*}
If we consider any initial condition $u_0\in X$, then the evolution of the free dynamics with initial condition $u_0$ can be represented by the action of the semigroup, $u(t)=e^{-tA}u_0$. Therefore, one can prove easily that, when $A$ is strictly accretive, choosing the control $p=0$, system \eqref{stab} is locally exponentially stabilizable to the trajectory $\psi_1$. Indeed,
\begin{equation}
||u(t)-\psi_1(t)||=||e^{-tA}u_0-e^{-tA}\varphi_1||\leq e^{-\nu t}||u_0-\varphi_1||
\end{equation}
and this quantity tends to $0$ as $t$ goes to $+\infty$. 

On the contrary, in the general case of an accretive operator $A$, we do not have a straightforward choice of $p$ to deduce any stabilizability property of system \eqref{stab} to the ground state $\psi_1$. 

The novelty of our work is the construction of a control function $p$ that brings $u(t)$ arbitrary close to $\psi_1(t)$ in a very short time. Namely, we prove that \eqref{stab} is locally superexponentially stabilizable to the ground state solution. This can be seen as a weak version of the exact controllability to trajectories.

Let $B: X\to X$ be a bounded linear operator. From now on we denote by $C_B$ the norm of $B$ 
\begin{equation*}
C_B=\sup_{\varphi\in X,\,\,||\varphi||=1}||B\varphi||
\end{equation*}
and, without loss of generality, we suppose $C_B\geq1$.

We can now state our main result.

\begin{thm}\label{ta1}
Let $A:D(A)\subset X\to X$ be a densely defined linear operator satisfying hypothesis \eqref{ip} and suppose that there exists a constant $\alpha>0$ such that the eigenvalues of $A$ fulfill the gap condition
\begin{equation}\label{gap}
\sqrt{\lambda_{k+1}}-\sqrt{\lambda_k}\geq \alpha,\quad\forall k\in \NN^*.
\end{equation}
Let $B: X\to X$ be a bounded linear operator and let $\tau>0$ be such that
\begin{equation}\label{a2}
\begin{array}{l}
\langle B\varphi_1,\varphi_k\rangle\neq 0,\qquad\forall k\in\NN^*,\\\\
\displaystyle{\sum_{k\in\NN^*}\frac{e^{-2\lambda_k\tau}}{|\langle B\varphi_1,\varphi_k\rangle|^2}<+\infty.}
\end{array}
\end{equation}
Then, system \eqref{stab} is superexponentially stabilizable to $\psi_1$. 

Moreover, for every $\rho>0$ there exists $R_{\rho}>0$ such that any $u_0\in B_{R_\rho}(\varphi_1)$ admits a control $p\in L^2_{loc}([0,+\infty))$ such that the corresponding solution $u(\cdot; u_0,p)$ of \eqref{stab} satisfies
\begin{equation}
||u(t)-\psi_1(t)||\leq M e^{-(\rho e^{\omega t}+\lambda_1 t)},\qquad \forall t\geq 0,
\end{equation}
where $M$ and $\omega$ are positive constants depending only on $A$ and $B$.
\end{thm}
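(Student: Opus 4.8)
The plan is to linearize \eqref{stab} around the ground state solution $\psi_1$, to prove exact controllability of the linearized system by the moment method with a good bound on the control, and then to bootstrap this into superexponential decay by iterating a one-step quadratic estimate. First I would set $v=u-\psi_1$; since $\psi_1'+A\psi_1=0$, the function $v$ solves $v'+Av+p(t)e^{-\lambda_1 t}B\varphi_1=-p(t)Bv$ with $v(0)=v_0:=u_0-\varphi_1$, and the associated linearized control system is $w'+Aw+p(t)e^{-\lambda_1 t}B\varphi_1=0$, $w(0)=v_0$. Projecting onto the orthonormal basis $\{\varphi_k\}$ and solving the resulting scalar ODEs, one finds, at a fixed time $T>\tau$,
\[
w_k(T)=e^{-\lambda_k T}\Big(v_{0,k}-\langle B\varphi_1,\varphi_k\rangle\int_0^T e^{(\lambda_k-\lambda_1)s}p(s)\,ds\Big),\qquad v_{0,k}:=\langle v_0,\varphi_k\rangle ,
\]
so that driving $w$ exactly to $0$ at time $T$ is equivalent to the moment problem $\int_0^T e^{(\lambda_k-\lambda_1)s}p(s)\,ds=v_{0,k}/\langle B\varphi_1,\varphi_k\rangle$ for all $k\in\NN^*$; the first part of \eqref{a2} is exactly what makes the right-hand side meaningful.

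Next I would solve this moment problem with a \emph{linear} bound on $p$. The gap condition \eqref{gap} makes $\{\sqrt{\lambda_k}\}$ $\alpha$-separated, hence $\sum_k\lambda_k^{-1}<\infty$, and by the classical construction of biorthogonal families, after the change of variable $s\mapsto T-s$, there is $\{\sigma_k\}\subset L^2(0,T)$ biorthogonal to $\{e^{-(\lambda_k-\lambda_1)s}\}$ with $\|\sigma_k\|_{L^2(0,T)}\le Ce^{\omega_0\sqrt{\lambda_k}}$, the constants depending only on $\alpha$ and $T$. Then $p(s):=\sum_k\frac{v_{0,k}}{\langle B\varphi_1,\varphi_k\rangle}e^{-(\lambda_k-\lambda_1)T}\sigma_k(T-s)$ solves the moment problem, and by Cauchy--Schwarz
\[
\|p\|_{L^2(0,T)}\le Ce^{\lambda_1 T}\Big(\sum_k|v_{0,k}|^2\Big)^{1/2}\Big(\sum_k\frac{e^{-2\lambda_k T}e^{2\omega_0\sqrt{\lambda_k}}}{|\langle B\varphi_1,\varphi_k\rangle|^2}\Big)^{1/2}.
\]
Choosing $T>\tau$ makes the last series finite: $e^{-2\lambda_k T}e^{2\omega_0\sqrt{\lambda_k}}=e^{-2\lambda_k\tau}\,e^{-2\lambda_k(T-\tau)+2\omega_0\sqrt{\lambda_k}}$, the second factor is bounded uniformly in $k$, so the series is dominated by the one in \eqref{a2}. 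This yields a control with $\|p\|_{L^2(0,T)}\le K\|v_0\|$, $K=K(A,B)$, steering the linearized solution exactly to $0$ at time $T$. I expect this to be the main difficulty: making the subexponential biorthogonal bound $e^{\omega_0\sqrt{\lambda_k}}$ (which uses the $\sqrt{\ }$-gap) compatible with the summability in \eqref{a2} by slightly enlarging $\tau$ is the delicate step.

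With this $p$ I would return to the nonlinear equation for $v$. By Theorem \ref{wellpos} and a contraction argument in $C([0,T];X)$ — the map $v\mapsto-\int_0^{\cdot}e^{-(\cdot-s)A}p(s)Bv(s)\,ds$ has Lipschitz constant $\le C_B\sqrt T\,\|p\|_{L^2(0,T)}$ there — the equation for $v$ has, for $\|v_0\|$ small, a unique solution with $\|v\|_{C([0,T];X)}\le 2\|w\|_{C([0,T];X)}\lesssim\|v_0\|$; since $v-w=-\int_0^{\cdot}e^{-(\cdot-s)A}p(s)Bv(s)\,ds$ and $w(T)=0$,
\[
\|v(T)\|=\|v(T)-w(T)\|\le C_B\sqrt T\,\|p\|_{L^2(0,T)}\,\|v\|_{C([0,T];X)}\le C'\|v_0\|^2 .
\]
Then, since \eqref{stab} is autonomous and homogeneous of degree one in $u$, I would restart at each time $nT$: rescaling by $e^{\lambda_1 nT}$ turns the trajectory on $[nT,(n+1)T]$ into the previous situation with initial datum at distance $c_n:=e^{\lambda_1 nT}\|v(nT)\|$ from $\varphi_1$, which stays small since $c_n\le c_0$. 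Applying the above on each block and tracking the rescaling gives $c_{n+1}\le\gamma c_n^2$ with $\gamma=C'e^{\lambda_1 T}$, hence $\gamma c_n\le(\gamma c_0)^{2^n}$. Choosing $R_\rho$ so small that $\gamma\|u_0-\varphi_1\|<1$, one gets $\|v(nT)\|=e^{-\lambda_1 nT}c_n\le\gamma^{-1}e^{-\lambda_1 nT}(\gamma c_0)^{2^n}$; writing $2^n=e^{\omega nT}$ with $\omega=(\log 2)/T$, interpolating on each $[nT,(n+1)T]$ through $\|v\|_{C([nT,(n+1)T];X)}\lesssim c_n e^{-\lambda_1 nT}$, and using $2^n\ge\tfrac12 e^{\omega t}$ for $t\le(n+1)T$, one obtains $\|u(t)-\psi_1(t)\|\le Me^{-(\rho e^{\omega t}+\lambda_1 t)}$ for all $t\ge0$, with $\rho=\tfrac12|\log(\gamma c_0)|$ arbitrarily large as $R_\rho\to0$ and $M,\omega$ depending only on $A,B$. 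The need to carry the factors $e^{\lambda_1 T}$ through the rescaled iteration — harmless because the quadratic map $c_{n+1}\le\gamma c_n^2$ dominates them — is exactly what produces the extra term $\lambda_1 t$ in the final rate, and what makes the decay even faster when $\lambda_1>0$.
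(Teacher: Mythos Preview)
Your proposal is correct and follows essentially the same strategy as the paper: linearize around the ground state, solve the resulting moment problem via a biorthogonal family to the exponentials (using the $\sqrt{\ }$-gap to get the subexponential bound $e^{C\sqrt{\lambda_k}}$, which is absorbed by taking $T>\tau$ in \eqref{a2}), obtain a one-step quadratic estimate $\|v(T)\|\le C\|v_0\|^2$, and iterate to get doubly-exponential decay. The only differences are organizational: the paper first reduces globally to the case $\lambda_1=0$ via the substitution $z(t)=e^{\lambda_1 t}u(t)$, $A_1=A-\lambda_1 I$ (whereas you carry $\lambda_1$ through and rescale by $e^{\lambda_1 nT}$ at each iteration step, which is equivalent), and it obtains the nonlinear bounds on $v$ and $v-\bar v$ by energy/Gronwall estimates rather than the Duhamel contraction argument you sketch---both yield the same quadratic estimate.
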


%

To prove Theorem \ref{ta1} we first start assuming that the first eigenvalue of $A$ is zero, $\lambda_1=0$, and we prove the local superexponential stabilizability of \eqref{stab} to the trajectory $\varphi_1$. Then, we will recover the general case from this one.

The proof of Theorem \ref{ta1} will be built through a series of propositions. The first result is the well-posedness of the problem
\begin{equation}\label{a1f}\left\{
\begin{array}{ll}
u'(t)+A u(t)+p(t)Bu(t)+f(t)=0,& t\in [0,T]\\
u(0)=u_0.
\end{array}\right.
\end{equation}
We introduce the following notation: 
\begin{equation*}\begin{array}{l}
||f||_{2,0}:=||f||_{L^2(0,T;X)},\qquad\forall\,f\in L^2(0,T;X)\\\\
||f||_{\infty,0}:=||f||_{C([0,T];X)}=\sup_{t\in [0,T]}||f(t)||,\qquad\forall\, f\in C([0,T];X).
\end{array}
\end{equation*}

\begin{prop}\label{propa24}
Let $T>0$. If $u_0\in X$, $p\in L^2(0,T)$ and $f\in L^2(0,T;X)$, then there exists a unique mild solution of \eqref{a1f}, i.e. a function $u\in C([0,T];X)$ such that the following equality holds in $X$ for every $t\in [0,T]$,

\begin{equation}
u(t)=e^{-tA }u_0-\int_0^t e^{-(t-s)A}[p(s)Bu(s)+f(s)]ds.
\end{equation}
Moreover, there exists a constant $C_1(T)>0$ such that 
\begin{equation}\label{a5}
||u||_{\infty,0}\leq C_1(T) (||u_0||+||f||_{2,0}).
\end{equation}
\end{prop}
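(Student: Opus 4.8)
The plan is to prove existence and uniqueness by a fixed-point argument on $C([0,T];X)$, then derive the estimate \eqref{a5} from a Gronwall-type inequality, with some care to handle the fact that $p\in L^2(0,T)$ rather than $L^\infty(0,T)$.

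\textbf{Existence and uniqueness.} First I would fix $u_0\in X$, $p\in L^2(0,T)$ and $f\in L^2(0,T;X)$, and define the map $\Phi:C([0,T];X)\to C([0,T];X)$ by
\begin{equation*}
(\Phi v)(t)=e^{-tA}u_0-\int_0^t e^{-(t-s)A}\bigl[p(s)Bv(s)+f(s)\bigr]\,ds.
\end{equation*}
That $\Phi v$ is well-defined and continuous follows from Theorem \ref{wellpos} applied with right-hand side $-p(\cdot)Bv(\cdot)-f(\cdot)$, which lies in $L^2(0,T;X)$ because $\|p(\cdot)Bv(\cdot)\|_{2,0}\leq C_B\|p\|_{L^2(0,T)}\|v\|_{\infty,0}$; a fixed point of $\Phi$ is exactly a mild solution. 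To get a contraction one estimates, for $v,w\in C([0,T];X)$ and $t\in[0,\bar T]$ with $\bar T\leq T$,
\begin{equation*}
\|(\Phi v)(t)-(\Phi w)(t)\|\leq \int_0^t \|p(s)\|\,C_B\,\|v(s)-w(s)\|\,ds\leq C_B\,\|p\|_{L^2(0,\bar T)}\,\sqrt{\bar T}\,\|v-w\|_{C([0,\bar T];X)},
\end{equation*}
using $\|e^{-(t-s)A}\|\leq 1$ and Cauchy--Schwarz. Since $\|p\|_{L^2(0,\bar T)}\to 0$ as $\bar T\to 0$, one can choose $\bar T$ small enough (depending only on $C_B$ and $p$, not on $u_0$ or $f$) that $\Phi$ is a contraction on $C([0,\bar T];X)$; the Banach fixed-point theorem gives a unique mild solution on $[0,\bar T]$. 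Because the smallness of $\bar T$ depends only on the norm of $p$ on the subinterval — and $s\mapsto\|p\|_{L^2(0,s)}$ is continuous — one can cover $[0,T]$ by finitely many such intervals and concatenate, obtaining a unique mild solution on all of $[0,T]$.

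\textbf{The a priori estimate.} For the bound \eqref{a5}, let $u$ be the mild solution and set $\phi(t)=\|u(t)\|$. Taking norms in the integral equation and using Theorem \ref{wellpos} (or directly $\|e^{-tA}\|\leq 1$ together with \eqref{014} for the $f$-part) gives
\begin{equation*}
\phi(t)\leq C_0(T)\bigl(\|u_0\|+\|f\|_{2,0}\bigr)+C_B\int_0^t \|p(s)\|\,\phi(s)\,ds,\qquad t\in[0,T].
\end{equation*}
Since $p\in L^2(0,T)\subset L^1(0,T)$, the function $s\mapsto C_B\|p(s)\|$ is in $L^1(0,T)$, so the integral form of Gronwall's lemma applies and yields
\begin{equation*}
\phi(t)\leq C_0(T)\bigl(\|u_0\|+\|f\|_{2,0}\bigr)\exp\Bigl(C_B\int_0^T\|p(s)\|\,ds\Bigr)\leq C_0(T)\,e^{C_B\sqrt{T}\,\|p\|_{L^2(0,T)}}\bigl(\|u_0\|+\|f\|_{2,0}\bigr),
\end{equation*}
so \eqref{a5} holds with $C_1(T)=C_0(T)\,e^{C_B\sqrt{T}\,\|p\|_{L^2(0,T)}}$.

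\textbf{Main obstacle.} The only genuinely delicate point is that $p$ is merely $L^2$ in time, so one cannot treat $p(t)Bu(t)$ as a bounded perturbation of $A$ and invoke standard perturbed-semigroup theory directly; the remedy, used above, is to work on short time intervals where $\|p\|_{L^2}$ is small (this is what makes $\Phi$ a contraction without any bound on $\|p\|_{L^\infty}$) and to rely on the continuity of $s\mapsto\|p\|_{L^2(0,s)}$ to patch these intervals together finitely many times. Everything else is a routine application of Theorem \ref{wellpos}, Cauchy--Schwarz, and Gronwall's inequality.
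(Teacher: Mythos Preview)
Your argument is correct. For existence and uniqueness you do exactly what the paper outsources to \cite{bms}: a contraction on short subintervals (exploiting that $\|p\|_{L^2}$ is small there) followed by concatenation. For the bound \eqref{a5}, however, you take a genuinely different route. The paper absorbs the $p$-term into the left-hand side on subintervals where $C_0(T)C_B\|p\|_{L^2}\le 1/2$ and then iterates, arriving at $C_1=(1+N)(2C_0(T/N))^N$ with $N$ the number of subintervals; you instead apply the integral Gronwall lemma directly and obtain $C_1(T)=C_0(T)\,e^{C_B\sqrt{T}\,\|p\|_{L^2(0,T)}}$. Your constant is cleaner and more explicit, and since in the rest of the paper only the dependence of $C_1$ on $\|p\|_{L^2(0,T)}$ matters (cf.\ the derivation of \eqref{c_T} from \eqref{a5} and \eqref{boundp}), either form serves equally well. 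The paper's subdivision argument has the minor virtue of avoiding Gronwall altogether, but at the cost of a less transparent constant.
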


Hereafter, we denote by $C$ a generic positive constant which may differ from line to line even if the symbol remains the same. Constants which play a specific role will be distinguished by an index i.e., $C_0$, $C_B$, \dots.

The proof of the existence of the mild solution of \eqref{a1f} is given in \cite{bms}. For what concerns the bound for the solution $u$ of \eqref{a1f}, it turns out that if $C_0(T)C_B||p||_{L^2(0,T)}\leq 1/2$, then we have inequality (\ref{a5}) with $C_1=C_2$ defined by
\begin{equation}\label{c1}
C_2:=2C_0(T).
\end{equation} 
Otherwise, to obtain \eqref{a5}, we proceed subdividing the interval $[0,T]$ into smaller subintervals for which $C_0(T)C_B||p||_{L^2}\leq1/2$ in all of them, and in this case the constant $C_1$ of inequality \eqref{a5} is defined by
\begin{equation}\label{cc1}
C_1=(1+N)(2C_0(T/N))^N,
\end{equation}
where $N$ is the number of subintervals.

Consider the system
\begin{equation}\label{sys}\left\{\begin{array}{ll}
u'(t)+A u(t)+p(t)Bu(t)=0,& t\in[0,T]\\
u(0)=u_0,
\end{array}\right.
\end{equation}
and the trajectory $\varphi_1$ that is a solution of \eqref{sys} when $p=0$, $u_0=\varphi_1$ and $\lambda_1=0$. Set $v:=u-\varphi_1$, we observe that $v$ is the solution of the following Cauchy problem
\begin{equation}\label{v}
\left\{\begin{array}{ll}
v'(t)+A v(t)+p(t)Bv(t)+p(t)B\varphi_1=0,&t\in[0,T]\\
v(0)=v_0=u_0-\varphi_1.
\end{array}\right.
\end{equation}

\begin{oss}\label{oss37}
Applying Theorem \ref{wellpos}, we find that $v\in C([0,T];X)$ is a mild solution of \eqref{v}, that is
\begin{equation}
v(t)=e^{-tA}v_0-\int_0^tp(s)e^{-(t-s)A}B(v(s)+\varphi_1)ds=V_0(t)+V_1(t),
\end{equation}
where 
\begin{equation*}
\begin{array}{l}
V_0(t):=e^{-tA}v_0,\\\\
V_1(t):=-\int_0^tp(s)e^{-(t-s)A}B(v(s)+\varphi_1)ds.
\end{array}
\end{equation*}
Since $p(\cdot)B(v(\cdot)+\varphi_1)\in L^2(0,T;X)$, we have that $V_1\in H^1(0,T;X)\cap L^2(0,T;D(A))$, while $V_0\in C^1((0,T];X)\cap C((0,T];D(A))$. Therefore, for every $\eps\in (0,T)$, $v\in H^1(\eps,T;X)$ and for almost every $t\in[\eps,T]$ the following equality holds
\begin{equation}\label{vae}
v'(t)+Av(t)+p(t)Bv(t)+p(t)B\varphi_1=0.
\end{equation}
\end{oss}

Showing the stabilizability of the solution $u$ of \eqref{sys} to the trajectory $\varphi_1$ is equivalent to proving the stabilizability to $0$ of system \eqref{v}: we have to prove that there exists $\delta>0$ such that, for every initial condition $v_0$ that satisfies $||v_0||\leq\delta$, there exists a trajectory-control pair $(v,p)$ such that $\lim_{t\to +\infty}||v(t)||=0$.

For this purpose, we consider the following linearized system
\begin{equation}\label{lin}\left\{\begin{array}{ll}
\bar{v}(t)'+A \bar{v}(t)+p(t)B\varphi_1=0,&t\in[0,T]\\
\bar{v}(0)=v_0.
\end{array}\right.
\end{equation}
For this linear system we are able to prove the following null controllability result.
\begin{prop}\label{prop34}
Let $T>\tau$ and let $A$ and $B$ be such that \eqref{ip}, \eqref{gap}, \eqref{a2} hold and furthermore we assume $\lambda_1=0$. Let $v_0\in X$. Then, there exists a control $p\in L^2(0,T)$ such that $\bar{v}(T)=0$.

Moreover, there exists a constant $C_\alpha(T)>0$ such that
\begin{equation}\label{pbound}
||p||_{L^2(0,T)}\leq C_\alpha(T)\Lambda_T||v_0||
\end{equation}
where $\Lambda_T$ is defined in \eqref{lambdaT} and $\alpha>0$ is the constant in \eqref{gap}.
\end{prop}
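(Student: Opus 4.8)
The plan is to turn the requirement $\bar v(T)=0$ into a moment problem and to solve it with a biorthogonal family supplied by the gap condition \eqref{gap}. First I would write the mild solution of \eqref{lin} given by Theorem \ref{wellpos},
\begin{equation*}
\bar v(t)=e^{-tA}v_0-\int_0^t p(s)\,e^{-(t-s)A}B\varphi_1\,ds,
\end{equation*}
and project it on the orthonormal basis $\{\varphi_k\}$ by means of \eqref{semigr}: for every $k\in\NN^*$,
\begin{equation*}
\langle \bar v(T),\varphi_k\rangle=e^{-\lambda_k T}\Bigl(\langle v_0,\varphi_k\rangle-\langle B\varphi_1,\varphi_k\rangle\int_0^T p(s)\,e^{\lambda_k s}\,ds\Bigr).
\end{equation*}
Since $\langle B\varphi_1,\varphi_k\rangle\neq 0$ by \eqref{a2} and $\{\varphi_k\}$ is complete, $\bar v(T)=0$ is equivalent to the moment problem
\begin{equation*}
\int_0^T p(s)\,e^{\lambda_k s}\,ds=\frac{\langle v_0,\varphi_k\rangle}{\langle B\varphi_1,\varphi_k\rangle}=:c_k,\qquad\forall k\in\NN^*
\end{equation*}
(for $k=1$, since $\lambda_1=0$, this is just $\int_0^T p=\langle v_0,\varphi_1\rangle/\langle B\varphi_1,\varphi_1\rangle$).

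Next I would exploit \eqref{gap}. From $\sqrt{\lambda_k}\ge(k-1)\alpha$ one gets $\lambda_k\ge(k-1)^2\alpha^2$, so $\sum_k\lambda_k^{-1}<\infty$ and the square-root gap holds; hence, by a classical construction in the theory of nonharmonic Fourier series, there is a family $\{\theta_k\}_{k\in\NN^*}\subset L^2(0,T)$ biorthogonal to $\{e^{-\lambda_k t}\}$, i.e. $\int_0^T\theta_k(t)e^{-\lambda_j t}\,dt=\delta_{kj}$, satisfying a bound $\|\theta_k\|_{L^2(0,T)}\le C_\alpha(T)\,e^{C_\alpha(T)\sqrt{\lambda_k}}$. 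Reversing time by $t\mapsto T-t$, the functions $\sigma_k(t):=e^{-\lambda_k T}\theta_k(T-t)$ are biorthogonal to the exponentials $\{e^{\lambda_j t}\}$ and $\|\sigma_k\|_{L^2(0,T)}=e^{-\lambda_k T}\|\theta_k\|_{L^2(0,T)}\le C_\alpha(T)\,e^{-\lambda_k T+C_\alpha(T)\sqrt{\lambda_k}}$. I would then set
\begin{equation*}
p(t):=\sum_{k\in\NN^*}c_k\,\sigma_k(t),
\end{equation*}
and, once this series is shown to converge in $L^2(0,T)$, the moment conditions follow by passing to the limit in $\int_0^T p_N(t)e^{\lambda_j t}\,dt=c_j$ (with $p_N$ the partial sums), which by the computation above yields $\bar v(T)=0$.

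It remains to prove convergence and the bound, and this is where the assumptions really interact. By the triangle inequality, Cauchy--Schwarz, and $\sum_k|\langle v_0,\varphi_k\rangle|^2=\|v_0\|^2$,
\begin{equation*}
\|p\|_{L^2(0,T)}\le\sum_k|c_k|\,\|\sigma_k\|_{L^2(0,T)}\le C_\alpha(T)\,\|v_0\|\Bigl(\sum_k\frac{e^{-2\lambda_k T+2C_\alpha(T)\sqrt{\lambda_k}}}{|\langle B\varphi_1,\varphi_k\rangle|^2}\Bigr)^{1/2}.
\end{equation*}
Using $T>\tau$ I would split $e^{-2\lambda_k T}=e^{-2\lambda_k\tau}\,e^{-2\lambda_k(T-\tau)}$ and observe that $e^{-2\lambda_k(T-\tau)+2C_\alpha(T)\sqrt{\lambda_k}}$ is bounded uniformly in $k$ (it tends to $0$ since $\lambda_k\to\infty$), so the last sum is dominated by $\sum_k e^{-2\lambda_k\tau}/|\langle B\varphi_1,\varphi_k\rangle|^2<\infty$ by \eqref{a2}; this gives $p\in L^2(0,T)$ and the estimate $\|p\|_{L^2(0,T)}\le C_\alpha(T)\Lambda_T\|v_0\|$ with $\Lambda_T$ as in \eqref{lambdaT}. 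The main obstacle is precisely this matching: one needs the biorthogonal family to grow no faster than $e^{C\sqrt{\lambda_k}}$ — which is exactly what the square-root gap \eqref{gap} provides and why a weaker spectral gap would fail — together with the slack $T>\tau$ to absorb that growth against the weights $|\langle B\varphi_1,\varphi_k\rangle|^{-1}$ from \eqref{a2}. The existence and quantitative bound for $\{\theta_k\}$ is classical and can be quoted; the bookkeeping leading to the constant $C_\alpha(T)\Lambda_T$ is routine once the above splitting is set up.
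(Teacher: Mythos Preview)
Your argument is correct and follows essentially the same route as the paper: reduce $\bar v(T)=0$ to the moment problem $\int_0^T p(s)e^{\lambda_k s}\,ds=\langle v_0,\varphi_k\rangle/\langle B\varphi_1,\varphi_k\rangle$, solve it by the series $p=\sum_k c_k\sigma_k$ with a biorthogonal family whose $L^2$-norms grow like $e^{C\sqrt{\lambda_k}-\lambda_kT}$, and bound $\|p\|_{L^2}$ via Cauchy--Schwarz. The only cosmetic differences are that the paper quotes Theorem~\ref{t210} directly (so your time-reversal step is unnecessary) and relegates your finiteness-via-splitting argument to Remark~\ref{oss35}; also note that the exponent constant in the biorthogonal estimate should be $T$-independent ($C_\alpha/\alpha$, not $C_\alpha(T)$) to match $\Lambda_T$ exactly as defined in \eqref{lambdaT}.
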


Let us recall the notion of \emph{biorthogonal family} and a result we will use to show the null controllability of the linearized system \eqref{lin}.

\begin{dfn}
Let $\{\zeta_j\}$ and $\{\sigma_k\}$ be two sequences in a Hilbert space H. We say that the two families are \emph{biorthogonal} or that $\{\zeta_j\}$ (resp.$\{\sigma_k\}$) is \emph{biorthogonal} to $\{\sigma_k\}$ (resp. $\{\zeta_j\}$) if
$$\langle \zeta_j,\sigma_k\rangle_H=\delta_{j,k},\quad\forall j,k\geq 0$$
where $\delta_{j,k}$ is the Kronecker delta.
\end{dfn}

The notion of biorthogonal family was used by Fattorini and Russell in \cite{fr}, where they introduced the moment method. Such a technique was developed later by several authors. We recall below the result proved in \cite{cmv}.

\begin{thm}\label{t210}
Let $\{\omega_k\}_{k\in\NN}$ be an increasing sequence of nonnegative real numbers.
Assume that there exists a constant $\alpha>0$ such that
$$\forall k\in\NN,\quad \sqrt{\omega_{k+1}}-\sqrt{\omega_k}\geq\alpha.$$
Then, there exists a family $\{\sigma_j\}_{j\geq 0}$ which is biorthogonal to the family $\{e^{\omega_kt}\}_{k\geq 0}$ in $L^2(0,T)$, that is,
$$\forall k,j\in\NN ,\quad \int_0^T \sigma_j(t)e^{\omega_kt}dt=\delta_{jk}.$$

Furthermore, there exist two constants $C_\alpha,C_\alpha(T)>0$ such that
\begin{equation}\label{019}
||\sigma_j||^2_{L^2(0,T)}\leq C^2_\alpha(T) e^{-2\omega_j T}e^{C_\alpha\sqrt{\omega_j}/\alpha},\quad\forall j\in\NN.
\end{equation}
\end{thm}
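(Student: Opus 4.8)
The plan is to move to the Fourier--Laplace side and realize each $\sigma_j$ as the inverse transform of an explicit entire function tailored to the gap hypothesis. Write $\mu_k:=\sqrt{\omega_k}$, so that the assumption reads $\mu_{k+1}-\mu_k\ge\alpha$; in particular $\omega_k\gtrsim(\alpha k)^2$ and $\sum_k\omega_k^{-1}<\infty$, which already guarantees that $\{e^{\omega_k t}\}$ is minimal in $L^2(0,T)$ and hence admits \emph{some} biorthogonal family. The quantitative bound \eqref{019} forces an explicit construction, so for $\sigma\in L^2(0,T)$ I set $\widehat\sigma(z)=\int_0^T\sigma(t)\,e^{zt}\,dt$. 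By the Paley--Wiener theorem this identifies $L^2(0,T)$ with the space of entire functions $F$ of exponential type at most $T$ (in the sense $|F(z)|\le C\,e^{T(\operatorname{Re}z)_+}$) whose restriction to the imaginary axis is square-integrable, and Plancherel gives $\|\sigma\|_{L^2(0,T)}^2=\frac{1}{2\pi}\int_{\RR}|\widehat\sigma(iy)|^2\,dy$. Under this dictionary the biorthogonality relations $\int_0^T\sigma_j(t)e^{\omega_k t}\,dt=\delta_{jk}$ become the interpolation conditions $\widehat{\sigma_j}(\omega_k)=\delta_{jk}$.

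Next I would build an entire function carrying the nodes. Since $\{\mu_k\}$ has a linear counting function, the canonical product $\Phi(w)=\prod_k\bigl(1-w^2/\mu_k^2\bigr)$ is entire of finite exponential type with simple zeros exactly at $\pm\mu_k$; the separation $\alpha$ is precisely what makes the product converge and, more importantly, yields a lower bound for $|\Phi'(\mu_j)|$, since no other zero lies within distance $\alpha$ of $\mu_j$. As $\Phi$ is even, $E(z):=\Phi(\sqrt z)$ is a well-defined entire function of order $1/2$ with simple zeros exactly at $z=\omega_k$, and on vertical lines it obeys an upper bound of the form $|E(iy)|\le C\,e^{c\sqrt{|y|}/\alpha}$, the order-$1/2$ growth that will ultimately generate the factor $e^{C_\alpha\sqrt{\omega_j}/\alpha}$ in \eqref{019}. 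The natural interpolant is
\[
\Psi_j(z)=\frac{E(z)}{E'(\omega_j)\,(z-\omega_j)}\;m_j(z),
\]
whose first factor already equals $1$ at $\omega_j$ and vanishes at every other node $\omega_k$; here $m_j$ is a normalizing multiplier with $m_j(\omega_j)=1$, and since $E$ vanishes at the remaining nodes the multiplier cannot disturb the interpolation conditions.

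The delicate point is the design of $m_j$. The bare quotient $E(z)/(z-\omega_j)$ is \emph{not} square-integrable on the imaginary axis, because its modulus grows there like $e^{c\sqrt{|y|}/\alpha}$. The multiplier must therefore simultaneously (i) restore $L^2$-integrability on vertical lines, (ii) keep the exponential type bounded by $T$ so that $\Psi_j$ corresponds to a function supported in $[0,T]$, and (iii) carry the correct $T$-dependence. Since $E$ has order $1/2$ and hence exponential type $0$, there is room to insert, besides the normalization, a factor of exponential type $T$ whose modulus on the imaginary axis equals $e^{-\omega_j T}$ (this anchors the support at the endpoint and, after squaring in Plancherel, is the source of $e^{-2\omega_j T}$), together with a factor that decays on the imaginary axis like $e^{-c'\sqrt{|y|}}$ --- the transform of a compactly supported function of Gevrey class $2$ --- chosen with decay rate strictly exceeding the growth rate of $E$. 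Once $m_j$ is fixed, Paley--Wiener returns $\sigma_j\in L^2(0,T)$ with $\widehat{\sigma_j}=\Psi_j$, biorthogonality follows by evaluating $\Psi_j$ at the real nodes $z=\omega_k$, and the Plancherel identity reduces \eqref{019} to estimating $\int_{\RR}|\Psi_j(iy)|^2\,dy$.

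The main obstacle is the quantitative analysis of the canonical product, on which every constant depends. One has to prove a sharp \emph{lower} bound for $|E'(\omega_j)|$ --- equivalently for $|\Phi'(\mu_j)|$, using the gap to isolate the $j$-th node and estimating the remaining factors of the product --- together with a matching \emph{upper} bound for $|E|$ on vertical lines, both with exponents tied to $1/\alpha$. These must then be reconciled with the multiplier so that the three contributions, namely $1/|E'(\omega_j)|^2$ from the normalization, $e^{c\sqrt{|y|}/\alpha}$ from $|E|$, and $e^{-2\omega_j T}$ from the anchoring factor, combine into exactly the right-hand side of \eqref{019}. I expect the evaluation of $\int_{\RR}|E(iy)|^2\,|m_j(iy)|^2/(y^2+\omega_j^2)\,dy$, split into the ranges $|y|\lesssim\omega_j$ and $|y|\gtrsim\omega_j$ while tracking the dependence on $\alpha$, to be the most laborious step; the whole scheme is the parabolic refinement of the Fattorini--Russell construction \cite{fr}.
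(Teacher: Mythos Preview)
The paper does not prove this theorem: it is quoted as a known result from \cite{cmv}, introduced with the sentence ``We recall below the result proved in \cite{cmv}.'' There is therefore no proof in the paper to compare your attempt against.

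That said, your sketch follows the classical Paley--Wiener/canonical-product route going back to Fattorini and Russell \cite{fr}, which is exactly the lineage the paper invokes and is, in broad outline, the standard machinery behind the cited result. One technical point worth flagging: your product $\Phi(w)=\prod_k(1-w^2/\mu_k^2)$ is ill-defined when $\mu_0=\sqrt{\omega_0}=0$, a case the hypotheses explicitly allow (the $\omega_k$ are only assumed nonnegative, and in the paper's applications one does have $\lambda_1=0$); the usual remedy is to split off the zero node as a bare factor $w$ before forming the Weierstrass product over the remaining $\mu_k$. Apart from this, the scheme you describe---interpolation conditions via the Laplace transform, lower bound on $|\Phi'(\mu_j)|$ from the gap $\alpha$, a Gevrey-type multiplier to kill the $e^{c\sqrt{|y|}/\alpha}$ growth on the imaginary axis while staying within exponential type $T$, and Plancherel to read off the $L^2$ norm---is precisely how such estimates are obtained.
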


\begin{oss}\label{oss35}
For all $T\in\RR$ we define the quantity
\begin{equation}\label{lambdaT}
\Lambda_T:=\left( \sum_{k\in\NN^*}\frac{e^{-2\lambda_kT}e^{C_{\alpha}\sqrt{\lambda_k}/\alpha}}{|\langle B\varphi_1,\varphi_k\rangle|^2}\right)^{1/2}
\end{equation}
and we observe that if there exists $\tau>0$ such that \eqref{a2} holds then, for every $T>\tau$, $\Lambda_T<+\infty$.

Furthermore, if $\lambda_1>0$ then $\Lambda_T\to 0$ as $T\to +\infty$.
\end{oss}

Thanks to Theorem \ref{t210} and Remark \ref{oss35} we are able to prove Proposition \ref{prop34}:

\begin{proof}[Proof (of Proposition \ref{prop34}).]
For any $v_0\in X$ and $p\in L^2(0,T)$, it follows from Proposition \ref{propa24} that there exists a unique mild solution $\bar{v}\in C^0([0,T],X)$ of \eqref{lin} that can be represented by the formula
\begin{equation}
\bar{v}(t)=e^{-tA}v_0-\int_0^t e^{-(t-s)A}p(s)B\varphi_1ds.
\end{equation}
We want to find $p\in L^2(0,T)$ such that $\bar{v}(T)=0$, thus the following equality must hold
\begin{equation}
\sum_{k\in\NN^*}\langle v_0,\varphi_k\rangle e^{-\lambda_k T}\varphi_k=\int_0^T p(s)\sum_{k\in\NN^*}\langle B\varphi_1,\varphi_k\rangle e^{-\lambda_k(T-s)}\varphi_kds.
\end{equation}
Since $\{\varphi_k\}_{k\in\NN^*}$ is an orthonormal basis of the space $X$, the equality must hold in every direction and it follows that
\begin{equation}
\langle v_0,\varphi_k\rangle=\int_0^T e^{\lambda_ks}p(s)\langle B\varphi_1,\varphi_k\rangle ds
\end{equation}
for every $k\in\NN^*$. Therefore, proving null controllability of the linearized system reduces to finding a function $p\in L^2(0,T)$ that satisfies
\begin{equation}\label{momp}
\int_0^T e^{\lambda_ks}p(s)ds=\frac{\langle v_0,\varphi_k\rangle}{\langle B\varphi_1,\varphi_k\rangle}
\end{equation}
for all $k\in\NN^*$.
Thanks to assumption (\ref{gap}), there exists $\alpha>0$ such that  the gap condition $\sqrt{\lambda_{k+1}}-\sqrt{\lambda_k}\geq\alpha$ holds for all $k\in\NN^*$. Then, Theorem \ref{t210} ensures the existence of a family $\{\sigma_k\}_{k\in\NN^*}$ that is biorthogonal to $\{e^{\lambda_ks}\}_{k\in\NN^*}$. Taking $p(s)=\sum_{k\in\NN^*}c_k\sigma_k(s)$ one finds that the coefficients $c_k$ are given by $c_k=\frac{\langle v_0,\varphi_k\rangle}{\langle B\varphi_1,\varphi_k\rangle}$, $\forall k\in\NN^*$. Thus, in order to show that
\begin{equation}\label{p}
p(s):=\sum_{k\in\NN^*}\frac{\langle v_0,\varphi_k\rangle}{\langle B\varphi_1,\varphi_k\rangle}\sigma_k(s)
\end{equation}
is a solution of \eqref{momp}, it suffices to prove that the series is convergent in $L^2(0,T)$. Indeed,
\begin{equation*}
||p||_{L^2(0,T)}\leq\sum_{k\in\NN^*}\left|\frac{\langle v_0,\varphi_k\rangle}{\langle B\varphi_1,\varphi_k\rangle}\right|||\sigma_k||_{L^2(0,T)}\leq ||v_0||\left(\sum_{k\in\NN^*}\frac{||\sigma_k||^2_{L^2(0,T)}}{|\langle B\varphi_1,\varphi_k\rangle|^2}\right)^{1/2}
\end{equation*}
and we appeal to estimate (\ref{019}) for $\{\sigma_k\}_{k\in\NN^*}$, with $\omega_k=\lambda_k$ for all $k\in\NN^*$, to obtain that
\begin{equation*}
\left(\sum_{k\in\NN^*}\frac{||\sigma_k||^2_{L^2(0,T)}}{|\langle B\varphi_1,\varphi_k\rangle|^2}\right)^{1/2}\leq\left( C^2_\alpha(T)\sum_{k\in\NN^*}\frac{e^{-2\lambda_kT}e^{C_\alpha\sqrt{\lambda_k}/\alpha}}{|\langle B\varphi_1,\varphi_k\rangle|^2})\right)^{1/2}=C_\alpha(T)\Lambda_T
\end{equation*}
that is finite thanks to hypothesis \eqref{a2} and Remark \ref{oss35}. Thus, the following bound for the $L^2$-norm of $p$ holds true:
\begin{equation*}
||p||_{L^2(0,T)}\leq C_\alpha(T)\Lambda_T||v_0||.
\end{equation*}
\end{proof}

In Proposition \ref{prop34} we have found a control $p$ that steers the solution of the linearized system to $0$ in time $T$. We use such a control in the nonlinear system \eqref{v} to obtain a uniform estimate for the solution $v(t)$.

\begin{prop}\label{prop38}
Let $A$ and $B$ satisfying hypotheses \eqref{ip}, \eqref{gap}, \eqref{a2} and furthermore we assume $\lambda_1=0$. Let $p\in L^2(0,T)$ be defined by the following formula
\begin{equation}\label{pdef}
p(t)=\sum_{k\in\NN^*}\frac{\langle v_0,\varphi_k\rangle}{\langle B\varphi_1,\varphi_k\rangle}\sigma_k(t)
\end{equation}
where $\{\sigma_k\}_{k\in\NN^*}$ is the biorthogonal family to $\{e^{\lambda_kt}\}_{k\in\NN^*}$ given by Theorem \ref{t210}.

Then, the solution $v$ of \eqref{v} satisfies
\begin{equation}\label{unifv}
\sup_{t\in[0,T]}||v(t)||^2\leq e^{C_3(T)\Lambda_T||v_0||+C_BT}(1+C_4(T)\Lambda_T^2)||v_0||^2
\end{equation}
where $C_B\geq1$ is the norm of the operator $B$, $C_3(T):=2\sqrt{T}C_BC_\alpha(T)$, and $C_4(T):=C_BC_\alpha^2(T)$.
\end{prop}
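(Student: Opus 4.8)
The plan is to run a classical energy estimate on the nonlinear equation \eqref{v}, using the accretivity of $A$ to discard the principal term, and then to close the argument with Gronwall's inequality together with the control bound \eqref{pbound} already established in Proposition \ref{prop34} (recall that the $p$ in \eqref{pdef} is exactly the control of Proposition \ref{prop34}, so \eqref{pbound} applies verbatim).

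First I would fix $\eps\in(0,T)$ and invoke Remark \ref{oss37}: $v\in H^1(\eps,T;X)\cap L^2(\eps,T;D(A))$ and the pointwise identity \eqref{vae} holds for a.e.\ $t\in[\eps,T]$. Hence $t\mapsto\|v(t)\|^2$ is absolutely continuous on $[\eps,T]$, and pairing \eqref{vae} with $v(t)$ gives, for a.e.\ $t\in[\eps,T]$,
\begin{equation*}
\frac12\frac{d}{dt}\|v(t)\|^2 = -\langle Av(t),v(t)\rangle - p(t)\langle Bv(t),v(t)\rangle - p(t)\langle B\varphi_1,v(t)\rangle .
\end{equation*}
Now $\langle Av(t),v(t)\rangle\geq 0$ by accretivity, $\|Bv(t)\|\leq C_B\|v(t)\|$, and $\|B\varphi_1\|\leq C_B\|\varphi_1\|=C_B$, so the two remaining terms are bounded respectively by $C_B|p(t)|\,\|v(t)\|^2$ and $C_B|p(t)|\,\|v(t)\|$. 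Applying Young's inequality to the last one \emph{with parameter tuned to $C_B$}, namely $C_B|p(t)|\,\|v(t)\|\leq \tfrac{C_B}{2}\,p(t)^2+\tfrac{C_B}{2}\,\|v(t)\|^2$, and writing $y(t):=\|v(t)\|^2$, I obtain the differential inequality
\begin{equation*}
y'(t)\leq \big(2C_B|p(t)|+C_B\big)\,y(t)+C_B\,p(t)^2\qquad\text{for a.e. }t\in[\eps,T].
\end{equation*}

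Integrating this by Gronwall's lemma on $[\eps,T]$ (the coefficient of $y$ is nonnegative) yields, for every $t\in[\eps,T]$,
\begin{equation*}
y(t)\leq\Big(y(\eps)+C_B\|p\|_{L^2(0,T)}^2\Big)\exp\!\Big(2C_B\|p\|_{L^1(0,T)}+C_B T\Big).
\end{equation*}
Since $v\in C([0,T];X)$, letting $\eps\to0^+$ replaces $y(\eps)$ by $\|v_0\|^2$ and the bound survives on all of $[0,T]$ (the exponent does not depend on $\eps$). Finally I insert $\|p\|_{L^1(0,T)}\leq\sqrt{T}\,\|p\|_{L^2(0,T)}$ and the estimate $\|p\|_{L^2(0,T)}\leq C_\alpha(T)\Lambda_T\|v_0\|$ from \eqref{pbound}: this turns $C_B\|p\|_{L^2(0,T)}^2$ into $C_4(T)\Lambda_T^2\|v_0\|^2$ with $C_4(T)=C_BC_\alpha^2(T)$, and $2C_B\|p\|_{L^1(0,T)}$ into $C_3(T)\Lambda_T\|v_0\|$ with $C_3(T)=2\sqrt{T}C_BC_\alpha(T)$, giving precisely \eqref{unifv}.

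The only genuinely delicate point is the initial instant: since $v_0$ need not lie in $D(A)$, the pointwise equation \eqref{vae} and the differentiability of $\|v(\cdot)\|^2$ are available only on $[\eps,T]$, so one must perform the estimate there and pass to the limit using the continuity of $v$ at $0$ (Proposition \ref{propa24}). Everything else is a routine combination of accretivity, Young's inequality, Gronwall and the already-proved control bound; one should only be careful that the parameter in Young's inequality is chosen to be exactly $C_B$ (not $C_B^2$), which is what makes the constants come out in the exact form stated.
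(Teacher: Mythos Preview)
Your argument is correct and follows essentially the same route as the paper: pair \eqref{vae} with $v$ on $[\eps,T]$ via Remark~\ref{oss37}, drop the accretive term, split the cross term $C_B|p|\,\|v\|$ by Young's inequality, apply Gronwall, pass to the limit $\eps\to0^+$ using $v\in C([0,T];X)$, and finally insert $\|p\|_{L^1}\leq\sqrt{T}\,\|p\|_{L^2}$ together with \eqref{pbound}. Your remark about the ``parameter tuned to $C_B$'' is just the factoring $C_B\cdot\big(|p|\,\|v\|\big)\leq C_B\cdot\tfrac12(|p|^2+\|v\|^2)$, which is exactly what the paper does; there is no genuine difference in the estimates or the constants.
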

\begin{proof}
We consider the equation in \eqref{v}. Thanks to Remark \ref{oss37}, since \eqref{vae} is satisfied for almost every $t\in[\eps,T]$, we are allowed to take the scalar product with $v$:
\begin{equation}
\langle v'(t),v(t)\rangle+\langle A v(t),v(t)\rangle+p(t)\langle Bv(t)+B\varphi_1,v(t)\rangle=0.
\end{equation}
Thus, using that $B$ is bounded, we get
\begin{equation}\label{energy}
\begin{split}
\frac{1}{2}\frac{d}{dt}||v(t)||^2+\langle A v(t),v(t)\rangle&\leq  C_B\left(|p(t)|||v(t)||^2+|p(t)|||\varphi_1||||v(t)||\right)\\
&\leq C_B\left(|p(t)|||v(t)||^2+\frac{1}{2}|p(t)|^2+\frac{1}{2}||v(t)||^2\right)\\
\end{split}
\end{equation}
and therefore, since $A$ is accretive, we have that
\begin{equation*}
\frac{1}{2}\frac{d}{dt}||v(t)||^2\leq C_B\left(|p(t)|+\frac{1}{2}\right)||v(t)||^2+\frac{1}{2}C_B|p(t)|^2.
\end{equation*}
We integrate the last inequality from $\eps$ to $t$:
\begin{equation*}
\int_\eps^t \frac{d}{ds}||v(s)||^2ds\leq 2C_B\int_\eps^t\left(|p(s)|+\frac{1}{2}\right)||v(s)||^2ds+C_B\int_0^T|p(s)|^2ds
\end{equation*}
and by Gronwall's inequality, we obtain
\begin{equation*}
||v(t)||^2\leq\left(||v(\eps)||^2+C_B\int_0^T|p(s)|^2ds\right)e^{2C_B\int_\eps^t(|p(s)|+1/2)}ds
\end{equation*}
and taking the limit $\eps\to0$ we find that
\begin{equation*}
||v(t)||^2\leq\left(||v_0||^2+C_B\int_0^T|p(s)|^2ds\right)e^{2C_B\int_0^t(|p(s)|+1/2)}ds.
\end{equation*}
Thus, taking the supremum over the interval $[0,T]$, the last inequality becomes
\begin{equation}
\begin{split}
\sup_{t\in[0,T]}||v(t)||^2\leq e^{C_B\left(2\sqrt{T}||p||_{L^2(0,T)}+T\right)}\left(||v_0||^2+C_B||p||^2_{L^2(0,T)}\right)
\end{split}
\end{equation}
and finally, recalling the estimate \eqref{pbound} for the $L^2$-norm of $p$ from Proposition \ref{prop34}, we get
\begin{equation}
\sup_{t\in[0,T]}||v(t)||^2\leq e^{C_B\left(2\sqrt{T}C_\alpha(T)\Lambda_T||v_0||+T\right)}\left(1+C_BC^2_\alpha(T)\Lambda_T^2\right)||v_0||^2.
\end{equation}
\end{proof}

We want now to measure the distance at time $T$ of the solutions of the nonlinear system and the linearized one when using the same control function $p$ built by solving of the moment problem in Proposition \ref{prop34}.

Therefore, we introduce the function $w(t):=v(t)-\bar{v}(t)$ that satisfies the following Cauchy problem
\begin{equation}\label{w}
\left\{\begin{array}{ll}
w'(t)+Aw(t)+p(t)Bv(t)=0,&t\in[0,T]\\
w(0)=0.
\end{array}\right.
\end{equation}
We define the constant $K^2_T:=C_BC_4(T)\Lambda^2_Te^{C_3(T)+(C_B+1)T}(1+C_4(T)\Lambda_T^2)$.

\begin{prop}\label{prop39}
Let $A$ and $B$ satisfy hypotheses \eqref{ip}, \eqref{gap}, \eqref{a2}, and furthermore we assume $\lambda_1=0$. Let $T>\tau$, $p$ be defined by \eqref{pdef}, and let $v_0\in X$ be such that 
\begin{equation}\label{K_T}
K_T||v_0||\leq 1.
\end{equation}
Then, it holds that
\begin{equation}\label{wT}
||w(T)||\leq K_T||v_0||^2.
\end{equation}
\end{prop}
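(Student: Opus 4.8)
The plan is to use the variation-of-constants formula for $w$ together with the contractivity of $e^{-tA}$ to bound $||w(T)||$ by the product of $\sup_{[0,T]}||v||$ and $||p||_{L^2(0,T)}$, both of which have already been controlled in Propositions \ref{prop38} and \ref{prop34}.

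First I would note that $w=v-\bar v$ solves \eqref{w} with $w(0)=0$; since $p(\cdot)Bv(\cdot)\in L^2(0,T;X)$ (because $p\in L^2(0,T)$, $B$ is bounded, and $v\in C([0,T];X)$ by Remark \ref{oss37}), Theorem \ref{wellpos}, applied with datum $0$ and source $-p(\cdot)Bv(\cdot)$, represents $w$ as
\[
w(t)=-\int_0^t e^{-(t-s)A}p(s)Bv(s)\,ds .
\]
Using $||e^{-(T-s)A}||\le 1$, $||Bv(s)||\le C_B||v(s)||$, and the Cauchy--Schwarz inequality in $s$, this gives
\[
||w(T)||\le C_B\int_0^T |p(s)|\,||v(s)||\,ds\le C_B\sqrt{T}\,\Big(\sup_{s\in[0,T]}||v(s)||\Big)\,||p||_{L^2(0,T)} .
\]

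Next I would substitute the bound $||p||_{L^2(0,T)}\le C_\alpha(T)\Lambda_T||v_0||$ from \eqref{pbound} and the square root of \eqref{unifv}, namely $\sup_{[0,T]}||v||\le e^{\frac12(C_3(T)\Lambda_T||v_0||+C_BT)}(1+C_4(T)\Lambda_T^2)^{1/2}||v_0||$. Recalling $C_3(T)=2\sqrt{T}C_BC_\alpha(T)$ and $C_4(T)=C_BC_\alpha^2(T)$, this produces an estimate $||w(T)||\le \Theta_T\,||v_0||^2$ with
\[
\Theta_T=C_B\sqrt{T}\,C_\alpha(T)\Lambda_T\,e^{\frac12(C_3(T)\Lambda_T||v_0||+C_BT)}\,(1+C_4(T)\Lambda_T^2)^{1/2}.
\]
It then remains to check that $\Theta_T\le K_T$ under the smallness condition \eqref{K_T}. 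Writing $K_T$ as the square root of its definition, $K_T=\sqrt{C_BC_4(T)}\,\Lambda_T e^{\frac12(C_3(T)+(C_B+1)T)}(1+C_4(T)\Lambda_T^2)^{1/2}=C_BC_\alpha(T)\Lambda_T e^{\frac12(C_3(T)+(C_B+1)T)}(1+C_4(T)\Lambda_T^2)^{1/2}$, we see that $K_T\ge C_BC_\alpha(T)\Lambda_T$ because all the remaining factors are $\ge 1$; hence \eqref{K_T} forces $C_BC_\alpha(T)\Lambda_T||v_0||\le 1$, and therefore $\Lambda_T||v_0||\le 1$ (using $C_B\ge1$ and $C_\alpha(T)\ge1$, which may be assumed without loss of generality in Theorem \ref{t210}) and $C_3(T)\Lambda_T||v_0||\le C_3(T)$. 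This lets me replace $e^{\frac12 C_3(T)\Lambda_T||v_0||}$ by $e^{\frac12 C_3(T)}$, while the leftover polynomial factor $\sqrt T$ is absorbed by the spare $e^{T/2}$ obtained from the gap between $e^{C_BT/2}$ in $\Theta_T$ and $e^{(C_B+1)T/2}$ in $K_T$ (since $e^{T/2}\ge\sqrt T$). Comparing the two expressions yields $\Theta_T\le K_T$, hence \eqref{wT}.

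The main --- indeed essentially the only --- delicate point is this last matching of constants: one must verify that the a priori estimate coming from Propositions \ref{prop34}--\ref{prop38} does not exceed the precise constant $K_T$ in the statement, which is what dictates the bookkeeping above (in particular the deduction $\Lambda_T||v_0||\le1$ from $K_T||v_0||\le1$, and the harmless normalization $C_\alpha(T)\ge1$). Everything else is the standard mild-solution/Cauchy--Schwarz machinery already in place. I would also remark that the quadratic --- rather than linear --- dependence on $||v_0||$ in \eqref{wT} is exactly what falls out of the product $\sup_{[0,T]}||v||\cdot||p||_{L^2(0,T)}$, each factor being itself $O(||v_0||)$; this quadratic gain is what will later drive the doubly exponential convergence rate when the construction is iterated.
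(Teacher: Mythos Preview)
Your argument is correct and follows the same overall strategy as the paper: bound $||w(T)||$ by a constant times $||p||_{L^2(0,T)}\cdot\sup_{[0,T]}||v||$, invoke Propositions \ref{prop34} and \ref{prop38} to see that each factor is $O(||v_0||)$, and finally deduce $\Lambda_T||v_0||\le1$ from \eqref{K_T} (via the normalization $C_\alpha(T)\ge1$) so that the resulting constant fits under $K_T$.

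The one genuine technical difference is how you obtain the product bound. The paper works with the differential identity \eqref{eqw}, takes the inner product with $w$, and applies Gronwall to get $\sup_{[0,T]}||w||^2\le C_B^2e^{T}||p||_{L^2}^2\sup_{[0,T]}||v||^2$; this produces the factor $e^{T/2}$ that matches $K_T$ exactly. You instead use the Duhamel representation together with the contractivity of $e^{-tA}$ and Cauchy--Schwarz, which yields the slightly sharper prefactor $\sqrt{T}$ in place of $e^{T/2}$; you then absorb the discrepancy using $\sqrt{T}\le e^{T/2}$. Both routes are standard and lead to \eqref{wT}; yours is marginally more elementary in that it avoids Gronwall, while the paper's energy method has the minor advantage of giving directly a uniform-in-time bound on $||w(t)||$ (not just at $t=T$) with a constant that coincides with $K_T$ without any further adjustment.
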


\begin{proof}
Observe that $w\in C([0,T];X)$ is the mild solution of \eqref{w}. Moreover $w\in H^1(0,T;X)\cap L^2(0,T;D(A))$ and thus $w$ satisfies the equality
\begin{equation}\label{eqw}
w'(t)+Aw(t)+p(t)Bv(t)=0
\end{equation}
for almost every $t\in [0,T]$.

We multiply equation \eqref{eqw} by $w(t)$ and we obtain
\begin{equation}\begin{split}
\frac{1}{2}\frac{d}{dt}||w(t)||^2&\leq |p(t)|||Bv(t)||||w(t)||\\
&\leq \frac{1}{2}||w(t)||^2+C^2_B\frac{1}{2}|p(t)|^2||v(t)||^2.
\end{split}
\end{equation}
Therefore, applying Gronwall's inequality, taking the supremum over $[0,T]$ and using \eqref{unifv} and \eqref{pbound}, we get
\begin{equation}\begin{split}
\sup_{t\in[0,T]}||w(t)||^2&\leq C^2_Be^T||p||^2_{L^2(0,T)}\sup_{t\in[0,T]}||v(t)||^2\\
&\leq C^2_Be^{C_3(T)\Lambda_T||v_0||+C_BT+T}(1+C_4(T)\Lambda_T^2)||v_0||^2||p||^2_{L^2(0,T)}\\
&\leq C^2_BC^2_\alpha(T)\Lambda_T^2e^{C_3(T)\Lambda_T||v_0||+(C_B+1)T}(1+C_4(T)\Lambda_T^2)||v_0||^4.
\end{split}
\end{equation}
We can suppose, without loss of generality, that $C_\alpha(T)\geq 1$. Thus, from \eqref{K_T}, we obtain that $\Lambda_T||v_0||\leq 1$. Therefore, 
\begin{equation*}
\sup_{t\in[0,T]}||w(t)||^2\leq K^2_T||v_0||^4,
\end{equation*}
that implies
\begin{equation}\label{vT}
||w(T)||\leq K_T||v_0||^2.
\end{equation}
\end{proof}

Recalling that $\bar{v}(T)=0$, we deduce from \eqref{wT} that 
\begin{equation}\label{vT}
||v(T)||\leq K_T||v_0||^2,
\end{equation}
and, moreover,
\begin{equation}\label{magg}
K_T||v(T)||\leq \left(K_T||v_0||\right)^2\leq 1.
\end{equation}
We observe that we can apply Proposition \ref{prop39} to problem \eqref{v} defined in the interval $[T,2T]$. Indeed, $v_T:=v(T)$ that was computed by solving \eqref{v}, is the initial condition of the problem
\begin{equation}\label{T2T}
\left\{\begin{array}{ll}
v_t(t)+A v(t)+p(t)Bv(t)+p(t)B\varphi_1=0,&t\in [T,2T]\\
v(T)=v_T.
\end{array}\right.
\end{equation}
We shift this problem to the interval $[0,T]$ by introducing the variable $s:=t-T$ in the above system.
If we set $\tilde{v}(s):=v(s+T)$ and $\tilde{p}:=p(s+T)$, then $\tilde{v}$ solves
\begin{equation}
\left\{\begin{array}{ll}
\tilde{v}_t(s)+A \tilde{v}(s)+\tilde{p}(s)B\tilde{v}(s)+\tilde{p}(s)B\varphi_1=0,&s\in [0,T]\\
\tilde{v}(0)=v_T.
\end{array}\right.
\end{equation}
Here the control $\tilde{p}$ is given by Proposition \ref{prop38}, with initial condition $v_T$, that is:
\begin{equation}
\tilde{p}(s)=\sum_{k\in\NN^*}\frac{\langle v_T,\varphi_k\rangle}{\langle B\varphi_1,\varphi_k\rangle}\sigma_k(s)
\end{equation}
where $\{\sigma_k(s)\}_{k\in\NN^*}$ is the biorthogonal family to $\{e^{\lambda_ks}\}_{k\in\NN^*}$ in $[0,T]$. Thus, it is possible to bound the $L^2$-norm of $\tilde{p}$ by
\begin{equation}
||\tilde{p}||_{L^2(0,T)}\leq C_\alpha(T)\Lambda_T||v_T||
\end{equation}
thanks to the estimate for $\{\sigma_k(s)\}_{k\in\NN^*}$ given in Theorem \ref{t210}.
Therefore, for the control $p$ of the linearized system associated to \eqref{T2T}, it holds that
\begin{equation*}
||p||_{L^2(T,2T)}=||\tilde{p}||_{L^2(0,T)}\leq C_\alpha(T)\Lambda_T||v_T||.
\end{equation*}

Finally, thanks to \eqref{magg}, the hypotheses of Proposition \ref{prop39} for problem \eqref{T2T} are satisfied and we obtain that $||v(2T)||\leq K_T||v(T)||^2$. Furthermore,
\begin{equation}
K_T||v(2T)||\leq(K_T||v_0||)^2\leq 1,
\end{equation}
and we can repeat this argument for the next intervals $[2T,3T], [3T,4T],\dots,[(n-1)T,nT],\dots$.
Therefore, we deduce that
\begin{equation}\label{maggn}
K_T||v(nT)||\leq 1, \qquad \forall n\in\NN^*.
\end{equation}

Now, we want to obtain an estimate as \eqref{vT} for the solution $v$ of problem \eqref{v} defined in time intervals of the form $[nT,(n+1)T]$, with $n\geq 1$.
\begin{prop}
Let $A$ and $B$ satisfy hypotheses \eqref{ip}, \eqref{gap}, \eqref{a2} and furthermore we assume $\lambda_1=0$. Let $v_0\in X$ be such that 
\begin{equation}
K_T||v_0||\leq 1.
\end{equation}
Then, the following iterated estimate holds:
\begin{equation}\label{vnT}
||v(nT)||\leq \frac{1}{K_T}\left( K_T||v_0||\right)^{2^n},\qquad \forall n\geq 0.
\end{equation}
\end{prop}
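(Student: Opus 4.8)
The plan is a straightforward induction on $n$, using the one-step contraction estimate of Proposition~\ref{prop39} together with the bound \eqref{maggn} that keeps the induction hypothesis alive. The base case $n=0$ is an identity: since $2^0=1$, the right-hand side of \eqref{vnT} equals $\frac{1}{K_T}(K_T\|v_0\|)=\|v_0\|=\|v(0)\|$, so \eqref{vnT} holds with equality.

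For the inductive step I would assume that \eqref{vnT} holds for some $n\ge 0$. First I would note that, by \eqref{maggn}, $K_T\|v(nT)\|\le 1$, so $v(nT)$ is an admissible initial datum for the shifted problem on $[nT,(n+1)T]$ exactly as was done above for the passage from $[0,T]$ to $[T,2T]$: introducing $s=t-nT$, the translated state $\tilde v(s)=v(s+nT)$ solves \eqref{v} on $[0,T]$ with initial condition $v(nT)$ and control $\tilde p(s)=\sum_{k}\frac{\langle v(nT),\varphi_k\rangle}{\langle B\varphi_1,\varphi_k\rangle}\sigma_k(s)$ given by Proposition~\ref{prop38}. Since $K_T\|v(nT)\|\le 1$, the hypothesis \eqref{K_T} of Proposition~\ref{prop39} is met for this shifted problem, and that proposition yields
\begin{equation*}
\|v((n+1)T)\|=\|\tilde v(T)\|\leq K_T\|v(nT)\|^2 .
\end{equation*}

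Combining this with the induction hypothesis \eqref{vnT} and simplifying the exponents,
\begin{equation*}
\|v((n+1)T)\|\leq K_T\left(\frac{1}{K_T}\left(K_T\|v_0\|\right)^{2^n}\right)^{2}=\frac{1}{K_T}\left(K_T\|v_0\|\right)^{2^{n+1}},
\end{equation*}
which is \eqref{vnT} for $n+1$. This closes the induction.

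There is no real analytic obstacle here; the whole content has already been packaged into Propositions~\ref{prop38}--\ref{prop39} and into the observation \eqref{maggn}. The only point requiring care is the bookkeeping of the concatenation of controls: one must check that on each successive window $[nT,(n+1)T]$ the control produced by the moment method for initial datum $v(nT)$ is admissible, i.e. that $K_T\|v(nT)\|\le 1$ persists for every $n$; this is precisely the role of \eqref{magg}--\eqref{maggn}, which is why those estimates were recorded before stating the proposition.
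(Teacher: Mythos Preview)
Your proof is correct and follows essentially the same route as the paper: induction on $n$, shifting the problem on $[nT,(n+1)T]$ back to $[0,T]$, invoking \eqref{maggn} to ensure $K_T\|v(nT)\|\le 1$, and then applying Proposition~\ref{prop39} to get $\|v((n+1)T)\|\le K_T\|v(nT)\|^2$. The only cosmetic difference is that the paper takes $n=1$ as the base case (quoting Proposition~\ref{prop39} directly), whereas you take $n=0$, which is the trivial identity $\|v(0)\|=\|v_0\|$; both are fine.
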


\begin{proof}
We proceed by induction on $n$. For $n=1$, the formula has been proved in Proposition \ref{prop39}. We suppose that \eqref{vnT} holds and we prove the estimate for $v((n+1)T)$:
iterating the construction of the solution $v$ of \eqref{v} in consecutive time intervals of the form $[kT,(k+1)T]$ until $k+1=n$, we come to the following problem
\begin{equation}\label{sysnT}
\left\{\begin{array}{ll}
v'(t)+A v(t)+p(t)Bv(t)+p(t)B\varphi_1=0, & t\in [nT,(n+1)T],\\
v(nT)=v_{nT}.
\end{array}
\right.
\end{equation}
where $v_{nT}$ is the value assumed at time $nT$ by the solution of the same problem solved in the interval $[(n-1)T,nT]$ with initial data $v_{(n-1)T}$. We shift this problem in the time interval $[0,T]$ by introducing the variable $s:=t-nT$ and the functions $\tilde{v}(s)=v(s+nT)$, $\tilde{p}(s)=p(s+nT)$. Then, $\tilde{v}$ is the solution of the following Cauchy problem
\begin{equation}\label{sysTtilde}
\left\{\begin{array}{ll}
\tilde{v}_t(s)+A \tilde{v}(s)+\tilde{p}(s)B\tilde{v}(s)+\tilde{p}(s)B\varphi_1=0,&s\in [0,T]\\
\tilde{v}(0)=v_{nT}.
\end{array}\right.
\end{equation}
The control function $\tilde{p}$ is defined in $[0,T]$ by solving the null controllability problem for the associated linearized system and its $L^2$-norm can be bound by
\begin{equation*}
||\tilde{p}||_{L^2(0,T)}\leq C_\alpha(T)\Lambda_T||v_{nT}||.
\end{equation*}
Therefore, coming back to the original time interval $[nT,(n+1)T]$ we find that
\begin{equation}\label{boundp}
||p||_{L^2(nT,(n+1)T)}=||\tilde{p}||_{L^2(0,T)}\leq C_\alpha(T)\Lambda_T||v_{nT}||.
\end{equation}
Moreover, since it holds that
\begin{equation}
K_T||v_{nT}||\leq 1
\end{equation}
we can use Proposition \ref{prop39} for problem \eqref{sysnT}, obtaining
\begin{equation}
||v((n+1)T)||\leq K_T||v(nT)||^2\leq K_T\left(\frac{1}{K_T}\left(K_T||v_0||\right)^{2^n}\right)^2=\frac{1}{K_T}\left(K_T||v_0||\right)^{2^{n+1}}
\end{equation}
and this concludes the induction argument and the proof of the proposition.
\end{proof}

The last step that allows us to prove Theorem \ref{ta1} consists in showing the rapid decay of the solution $u$ of our initial problem \eqref{stab} to the fixed stationary trajectory $\varphi_1$.

\begin{prop}\label{prop314}
Let $\theta\in(0,1)$ and $||v_0||\leq \frac{\theta}{K_T}$. Then, under the hypotheses \eqref{ip}, \eqref{gap}, \eqref{a2} and $\lambda_1=0$, there exists a constant $C_T>0$ such that
\begin{equation}
||u(t)-\varphi_1||\leq \frac{C_T}{K_T}\theta^{2^{t/T-1}}\qquad \forall t\geq0.
\end{equation}
\end{prop}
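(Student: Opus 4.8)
The plan is to \emph{interpolate} the doubly exponential decay, already established at the discrete times $nT$ in the previous proposition, to all real times $t\geq 0$, using the uniform-in-time estimate \eqref{unifv} of Proposition \ref{prop38} on each window $[nT,(n+1)T]$. First I would record that, since $\lambda_1=0$, one has $\psi_1(t)\equiv\varphi_1$, so that $\|u(t)-\varphi_1\|=\|v(t)\|$ with $v=u-\varphi_1$ solving \eqref{v}; and that the hypothesis $\|v_0\|\leq\theta/K_T$ means exactly $K_T\|v_0\|\leq\theta<1$, so \eqref{vnT} gives $\|v(nT)\|\leq\frac{1}{K_T}(K_T\|v_0\|)^{2^n}\leq\frac{1}{K_T}\theta^{2^n}$ for all $n\geq 0$.

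Next, fix $n\geq 0$ and look at the solution on $[nT,(n+1)T]$. As in the iterative scheme preceding \eqref{vnT}, the change of variable $s=t-nT$ turns it into a problem of the form \eqref{v} with initial datum $v_{nT}:=v(nT)$, to which Proposition \ref{prop38} applies; moreover \eqref{maggn} guarantees $K_T\|v_{nT}\|\leq 1$. Since $C_B\geq 1$, $C_\alpha(T)\geq 1$ (hence $C_4(T)=C_BC_\alpha^2(T)\geq 1$), the very definition of $K_T$ forces $K_T\geq\Lambda_T$, so that $\Lambda_T\|v_{nT}\|\leq K_T\|v_{nT}\|\leq 1$. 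Feeding this into \eqref{unifv} (applied to the shifted problem) one obtains
\[
\sup_{t\in[nT,(n+1)T]}\|v(t)\|^2\leq e^{C_3(T)+C_BT}\big(1+C_4(T)\Lambda_T^2\big)\,\|v_{nT}\|^2=:C_T^2\,\|v(nT)\|^2,
\]
with $C_T>0$ depending only on $T$ (hence only on $A$ and $B$) and \emph{not} on $n$.

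Finally I would combine the two bounds: for $t\in[nT,(n+1)T]$,
\[
\|v(t)\|\leq C_T\,\|v(nT)\|\leq\frac{C_T}{K_T}\,\theta^{2^n}.
\]
Since $t\in[nT,(n+1)T]$ implies $n\geq t/T-1$ and $\theta\in(0,1)$, we get $\theta^{2^n}\leq\theta^{2^{t/T-1}}$, whence $\|u(t)-\varphi_1\|=\|v(t)\|\leq\frac{C_T}{K_T}\theta^{2^{t/T-1}}$ for every $t\geq 0$, which is the claim (the window $n=0$, i.e.\ $t\in[0,T]$, is covered as well, and is consistent because $2^{t/T-1}\leq 1$ there).

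The argument is essentially bookkeeping; the only point requiring a little care is the \emph{uniformity} of the constant $C_T$ over all windows, which rests on the uniform bound $K_T\|v_{nT}\|\leq 1$ from \eqref{maggn} together with the elementary inequality $K_T\geq\Lambda_T$. One should also note, for the statement of superexponential stabilizability, that the controls built on the successive windows satisfy $\|p\|_{L^2(nT,(n+1)T)}\leq C_\alpha(T)\Lambda_T\|v_{nT}\|$ by \eqref{boundp}, which decays doubly exponentially in $n$; in particular the concatenated control $p$ lies in $L^2_{loc}([0,+\infty))$, as required.
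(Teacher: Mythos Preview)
Your argument is correct and follows essentially the same route as the paper: interpolate the discrete bound \eqref{vnT} to all $t\geq 0$ by a uniform estimate on each window $[nT,(n+1)T]$, then use $n\geq t/T-1$ to replace $\theta^{2^n}$ by $\theta^{2^{t/T-1}}$. The only cosmetic difference is that the paper obtains the window bound $||v(t)||\leq C_T||v(nT)||$ by invoking the mild-solution estimate \eqref{a5} together with \eqref{boundp}, whereas you use the energy estimate \eqref{unifv} directly; your observation that $K_T\geq\Lambda_T$ (so $\Lambda_T||v_{nT}||\leq 1$) makes the constant in \eqref{unifv} visibly independent of $n$, which is a clean way to handle the uniformity.
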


\begin{proof}
We have supposed that $||v_0||\leq \frac{\theta}{K_T}$, with $\theta\in(0,1)$. Thus, \eqref{vnT} becomes
\begin{equation}
||v(nT)||\leq \frac{\theta^{2^n}}{K_T}.
\end{equation}
Consider now the time interval $[nT,(n+1)T]$. From estimate \eqref{a5} for the solution of the control system in the time interval $[nT,(n+1)T]$ and from the bound \eqref{boundp} for the control $p$, we deduce that there exists a constant $C_T>0$ such that
\begin{equation}\label{c_T}
||v(t)||\leq C_T||v(nT)||,\qquad t\in[nT, (n+1)T].
\end{equation}
Therefore, using \eqref{vnT} in \eqref{c_T}, we obtain that
\begin{equation}
||v(t)||\leq C_T||v(nT)||\leq \frac{C_T}{K_T}\theta^{2^n}=\frac{C_T}{K_T}\left( \theta^{2^{(n+1)}}\right)^{1/2}.
\end{equation}
Since $n\leq \frac{t}{T}\leq (n+1)$ and $\theta\in (0,1)$, it holds that
\begin{equation}
||v(t)||\leq\frac{C_T}{K_T}\left( \theta^{2^{(n+1)}}\right)^{1/2}\leq \frac{C_T}{K_T}\left(\theta^{2^{t/T}} \right)^{1/2}=\frac{C_T}{K_T}\theta^{2^{t/T-1}}.
\end{equation}
By definition, $v(t)=u(t)-\varphi_1$. So, we get
\begin{equation}
||u(t)-\varphi_1||\leq \frac{C_T}{K_T}\theta^{2^{t/T-1}},\qquad t\geq0.
\end{equation}
\end{proof}

We are ready to prove Theorem \ref{ta1}.

\begin{proof}[Proof of Theorem \ref{ta1}]
We first consider the case in which the first eigenvalue of $A$ is zero.

Let $\theta\in (0,1)$ and let $\rho>0$ be the value for which $\theta=e^{-2\rho}$. Then, from Proposition \ref{prop314}, there exist a constant $R_{\rho}>0$ such that if $||u_0-\varphi_1||\leq R_{\rho}$, then 
\begin{equation*}
||u(t)-\varphi_1||\leq M_Te^{-\rho e^{\omega_T t}}, \forall t\geq0.
\end{equation*}
where $M_T,\omega_T>0$ are constants that depend only on $T$.
With the notation of the previous propositions, we have that
\begin{equation}
R_\rho:=\frac{e^{-2\rho}}{K_T},\qquad M_T:=\frac{C_T}{K_T},\qquad \omega_T:=\frac{\log{2}}{T}.
\end{equation}

Now, in order to deal with a general operator $A$ satisfying \eqref{ip}, we introduce the operator
\begin{equation}\label{A1}
A_1:=A-\lambda_1 I.
\end{equation}
We observe that $A_1:D(A_1)\subset X\to X$ is self-adjoint, accretive and $-A_1$ generates a strongly continuous analytic semigroup of contraction. Its eigenvalues are given by
\begin{equation}
\mu_k=\lambda_k-\lambda_1,\qquad \forall k\in\NN^*
\end{equation}
(in particular, $\mu_1=0$) and it has the same eigenfunctions as $A$, $\{\varphi_k\}_{k\in\NN^*}$. Moreover, the family $\{\mu_k\}_{k\in\NN^*}$ satisfies the same gap condition \eqref{gap} that is satisfied by the eigenvalues of $A$. Indeed, it holds that
\begin{equation*}
\sqrt{\mu_{k+1}}-\sqrt{\mu_k}=\frac{\lambda_{k+1}-\lambda_k}{\sqrt{\mu_{k+1}}+\sqrt{\mu_k}}\geq \frac{\lambda_{k+1}-\lambda_k}{\sqrt{\lambda_{k+1}}+\sqrt{\lambda_k}}=\sqrt{\lambda_{k+1}}-\sqrt{\lambda_k}\geq\alpha,\qquad \forall k\in\NN^*.
\end{equation*}
Thus, the operator $A_1$ satisfies the hypotheses that are required in Theorem \ref{ta1}.

We observe that if we introduce the function $z(t)=e^{\lambda_1 t}u(t)$, where $u$ is the solution of \eqref{stab}, then $z$ solves
\begin{equation}\label{z}
\left\{\begin{array}{ll}
z'(t)+A_1z(t)+p(t)Bz(t)=0,&t>0,\\
z(0)=u_0.
\end{array}\right.
\end{equation}
So, we can apply the previous analysis to this problem and deduce that there exist $M_T,\omega_T>0$ such that, for all $\rho>0$ there exists $R_\rho>0$ such that, if $||u_0-\varphi_1||\leq R_\rho$, then \begin{equation}\label{superex}
||z(t)-\varphi_1||\leq M_Te^{-\rho e^{\omega_T t}},\qquad \forall t\geq0.
\end{equation}

We claim that the local superexponetial stabilizability of $z$ to the stationary trajectory $\varphi_1$ implies the same property of $u$ to the ground state solution $\psi_1$. Indeed, it holds that
\begin{equation*}
||u(t)-\psi_1(t)||=||e^{-\lambda_1t}z(t)-e^{-\lambda_1t}\varphi_1||=e^{-\lambda_1t}||z(t)-\varphi_1||\leq M_Te^{-(\rho e^{\omega_T t}+\lambda_1t)},\quad \forall t\geq0
\end{equation*} 
and this concludes the proof also in the case of a strictly accretive operator $A$.
\end{proof}

\begin{oss}
Even in the case when $A:D(A)\subseteq X\to X$ has a finite number of negative eigenvalues, we can define the operator $A_1:=A-\lambda_1I$. $A_1$ has nonnegative eigenvalues and we can perform the proof of Theorem \ref{ta1} and deduce the superexponential stabilizability of the solution $u$ of the problem with diffusion operator $A$ to the ground state solution. In this case $\psi_1(t)=e^{\lambda_1t}\varphi_1$ blows up as $t\to\infty$ since $\lambda_1<0$, and the same occurs for the controlled solution $u$.
\end{oss}

\section{Applications}\label{examples}
In this section we discuss examples of bilinear control systems to which we can apply Theorem \ref{ta1}. The first problems we study are 1D parabolic equations of the form
\begin{equation*}
u_t(t,x)-u_{xx}(t,x)+p(t)Bu(t,x)=0,\quad(t,x)\in[0,T]\times(0,1)
\end{equation*}
in the state space $X=L^2(0,1)$, with Dirichlet or Neumann boundary conditions and with $B$ the following multiplication operators:
\begin{equation*}
Bu(t,x)=\mu(x)u(t,x).
\end{equation*}

Then, we prove the superexponential stabilizability of the following one dimensional equation with variable coefficients 
\begin{equation*}
u_t(t,x)-((1+x)^2u_x(t,x))_x+p(t)Bu(t,x)=0
\end{equation*}
with Dirichlet boundary condition.

Finally, we apply Theorem \ref{ta1} to the following parabolic equation 
\begin{equation*}
u_t(t,x)-\Delta u(t,x)+p(t)Bu(t,x)=0,\quad(t,x)\in[0,T]\times B^3
\end{equation*}
for radial data in the 3D unit ball $B^3$.

In each example, we will denote by $\{\lambda_k\}_{k\in\NN^*}$ and $\{\varphi_k\}_{k\in\NN^*}$, respectively, the eigenvalues and eigenfunctions of the second order operator associated with the problem under investigation. We will take $(\bar{u},\bar{p})=(\psi_1,0)$ as reference trajectory-control pair, where $\psi_1=e^{-\lambda_1 t}\varphi_1$ is the solution of the uncontrolled problem with initial condition $u(0,x)=\varphi_1$.

\subsection{Dirichlet boundary conditions.}\label{ex1}
Let $\Omega=(0,1)$, $X=L^2(\Omega)$ and consider the problem
\begin{equation}\label{1}\left\{\begin{array}{ll}
u_t(t,x)-u_{xx}(t,x)+p(t)\mu(x)u(t,x)=0 & x\in\Omega,t>0 \\
u=0 &x\in\partial \Omega, t>0\\
u(0,x)=u_0(x) & x\in\Omega,
\end{array}\right.
\end{equation}
where $p\in L^2(0,T)$ is the control function, $u$ the state variable, and $\mu$ is a function in $H^3(\Omega)$. 

We denote by $A$ the operator defined by
\begin{equation}\label{A}
D(A)=H^2\cap H^1_0(\Omega),\quad A\varphi=-\frac{d^2\varphi}{dx^2}.
\end{equation}
$A$ satisfies all the properties in \eqref{ip}: in particular, it is strictly accretive and its eigenvalues and eigenvectors have the following explicit expressions
\begin{equation}\nonumber
\lambda_k=(k\pi)^2,\quad \varphi_k(x)=\sqrt{2}\sin(k\pi x),\quad \forall k\in\NN^*.
\end{equation}
It is straightforward to prove that the eigenvalues fulfill the required gap property. Indeed, 
\begin{equation*}
\sqrt{\lambda_{k+1}}-\sqrt{\lambda_k}=(k+1)\pi-k\pi=\pi,\qquad \forall k\in \NN^*.
\end{equation*}
So, \eqref{gap} is satisfied.

In order to apply Theorem \ref{ta1} to system \eqref{1} and deduce the superexponential stabilizability to the trajectory $\psi_1$, we need to prove that there exists $\tau>0$ such that:
\begin{itemize}
\item $\langle B\varphi_1,\varphi_k\rangle\neq0$, for all $k\in\NN^*$,
\item the series
\begin{equation*}
\sum_{k\in\NN^*}\frac{e^{-2\lambda_k \tau}}{|\langle B\varphi_1,\varphi_k\rangle|^2}
\end{equation*}
is finite.
\end{itemize}
For this purpose, let us compute the scalar product $\langle B_0\varphi_1,\varphi_k\rangle=\langle\mu\varphi_1,\varphi_k\rangle$
\begin{equation*}\begin{split}
\langle \mu\varphi_1,\varphi_k\rangle&=\sqrt{2}\int_0^1 \mu(x)\varphi_1(x)\sin(k\pi x)dx\\
&=\sqrt{2}\left(-\left.(\mu(x)\varphi_1(x))\frac{\cos(k\pi x)}{k\pi}\right|^1_0+\int_0^1 (\mu(x)\varphi_1(x))'\frac{\cos(k\pi x)}{k\pi}dx\right)\\
&=\sqrt{2}\left(\left.(\mu(x)\varphi_1(x))'\frac{\sin(k\pi x)}{(k \pi)^2}\right|^1_0-
\int_0^1 (\mu(x)\varphi_1(x))''\frac{\sin(k\pi x)}{(k\pi)^2}dx\right)\\
&=\sqrt{2}\left(\left.(\mu(x)\varphi_1(x))''\frac{\cos(k\pi x)}{(k\pi)^3}\right|^1_0-\int_0^1(\mu(x)\varphi_1(x))'''\frac{\cos(k\pi x)}{(k\pi)^3}dx\right)\\
&=\frac{4}{k^3\pi^2}\left[\mu'(1)(-1)^{k+1}-\mu'(0)\right]-\frac{\sqrt{2}}{(k\pi)^3}\int_0^1(\mu(x)\varphi_1(x))'''\cos(k\pi x)dx.
\end{split}
\end{equation*}
Observe that the last integral term above represents the $k^{th}$-Fourier coefficient of the integrable function $(\mu(x)\varphi_1(x))'''$ and thus, it converges to zero as $k$ goes to infinity. Therefore, if we assume
\begin{equation}\label{abc}
\mu'(1)\pm\mu'(0)\neq 0\quad\mbox{ and }\quad\langle\mu\varphi_1,\varphi_k\rangle\neq0\quad\forall k \in \NN^*
\end{equation}
then, we deduce that $\langle \mu\varphi_1,\varphi_k\rangle$ is of order $1/k^3$ as $k\to\infty$. 

\begin{oss}
\emph{An example of a function which satisfies \eqref{abc} is $\mu(x)=x^2$. Indeed, in this case }
\begin{equation*}
\langle x^2\varphi_1,\varphi_k\rangle=\left\{\begin{array}{ll}
\frac{4k(-1)^k}{(k^2-1)^2},&  k\geq2,\\\\
\frac{2\pi^2-3}{6\pi^2},&k=1
\end{array}\right.
\end{equation*}
\emph{and so $\langle x^2\varphi_1,\varphi_k\rangle\neq0$ for all $k\in\NN^*$ and furthermore}
\begin{equation*}
|\langle x^2\varphi_1,\varphi_k\rangle|\geq\frac{2\pi^2-3}{6\pi^2}\frac{1}{k^3}=\frac{\pi(2\pi^2-3)}{6}\frac{1}{\lambda_k^{3/2}},\quad\forall k\in \NN^*.
\end{equation*}
\end{oss}

We conclude that, under assumption \eqref{abc},
\begin{equation}\label{lb}
\exists\,\, C>0 \mbox{ such that } |\langle B\varphi_1,\varphi_k\rangle|\geq ck^{-3}=C\lambda_k^{-3/2},\quad\forall k\in\NN^*
\end{equation}
and thanks to the polynomial behavior of the bound, the series
\begin{equation*}
\sum_{k\in\NN^*}\frac{e^{-2\lambda_k \tau}}{|\langle B\varphi_1,\varphi_k\rangle|^2}
\end{equation*}
converges for all $\tau>0$.

Therefore, all the hypotheses of Theorem \ref{ta1} are satisfied and system \eqref{1} is superexponentially stabilizable to the trajectory $\psi_1$.

\begin{oss}
\emph{Assumption \eqref{lb} for problem \eqref{1} is not too restrictive. In fact, it is possible to prove that the set of functions in $H^3(\Omega)$ for which \eqref{lb} holds is dense in $H^3(\Omega)$. For a proof of this fact, see Appendix A in \cite{bl}.}
\end{oss}

\subsection{Neumann boundary conditions}\label{ex3}
Now we look at an example with Neumann boundary conditions: let $\Omega=(0,1)$ and consider the following bilinear stabilzability problem
\begin{equation}\label{40}
\left\{\begin{array}{ll}
u_t(t,x)-\partial^2_{x}u(t,x)+p(t)\mu(x)u(t,x)=0 & x\in \Omega,t>0 \\
u_x=0 &x\in \partial \Omega,t>0\\
u(0,x)=u_0(x). & x\in\Omega
\end{array}\right.
\end{equation}

Let $X=L^2(\Omega)$. When we rewrite \eqref{40} in abstract form, the operators $A$ and $B$ are defined by
\begin{equation}\nonumber
D(A)=\{ \varphi\in H^2(0,1): \varphi'=0\mbox{ on }\partial\Omega\},\quad A\varphi=-\frac{d^2\varphi}{dx^2}
\end{equation}
\begin{equation}\nonumber
D(B)=X,\quad B\varphi=\mu\varphi.
\end{equation}
where $\mu$ is a real-valued function in $H^2(\Omega)$.

Operator $A$ satisfies the assumptions in \eqref{ip} and it is possible to compute explicitly its eigenvalues and eigenvectors:
\begin{equation*}
\begin{array}{lll}
\lambda_0=0,&\varphi_0=1\\
\lambda_k=(k\pi)^2,& \varphi_k(x)=\sqrt{2}\cos(k\pi x),& \forall k\geq1.
\end{array}
\end{equation*}

Since the eigenvalues are the same of those in Example \ref{ex1} for $k\geq1$, the gap condition is satisfied for all $k\geq0$.

Let us compute the scalar product $\langle \mu\varphi_0,\varphi_k\rangle$ to find, if it is possible, a lower bound of the Fourier coefficients of $B\varphi_0$:
\begin{equation*}\begin{split}
\langle \mu\varphi_0,\varphi_k\rangle&=\sqrt{2}\int_0^1 \mu(x)\cos(k\pi x)dx\\
&=\sqrt{2}\left(\left.\mu(x)\frac{\sin(k\pi x)}{k\pi}\right|^1_0-\int_0^1\mu'(x)\frac{\sin(k\pi x)}{k\pi}dx\right)\\
&=\sqrt{2}\left(\left.\mu'(x)\frac{\cos(k\pi x)}{(k\pi)^2}\right|^1_0-\int^1_0\mu''(x)\frac{\cos(k\pi x)}{(k\pi)^2}dx\right)\\
&=\frac{\sqrt{2}}{(k\pi)^2}\left(\mu'(1)(-1)^k-\mu'(0)\right)-\frac{\sqrt{2}}{(k\pi)^2}\int^1_0\mu''(x)\cos(k\pi x)dx.
\end{split}
\end{equation*}
Thus, reasoning as Example \ref{ex1}, if $\langle B\varphi_0,\varphi_k\rangle\neq0$ $\forall k \in\NN$ and $\mu\rq{}(1)\pm\mu\rq{}(0)\neq 0$, then we have that 
\begin{equation}\label{lb3}
\exists\,\, C>0\mbox{ such that } |\langle B\varphi_0,\varphi_k\rangle|\geq Ck^{-2}=C\lambda_k^{-1},\quad \forall k\in\NN^*
\end{equation}
and therefore the series in \eqref{a2} is finite for all $\tau>0$.

\begin{oss}
\emph{An example of a suitable function $\mu$ for problem \eqref{40} that satisfies the above hypothesis, is $\mu(x)=x^2$, for which}
\begin{equation*}
\langle x^2\varphi_0,\varphi_k\rangle=\left\{\begin{array}{ll}
\frac{2\sqrt{2}(-1)^{k}}{(k\pi)^2},&k\geq1,\\\\
\frac{1}{3},&k=0.
\end{array}\right.
\end{equation*}
\end{oss}

Applying Theorem \ref{ta1}, it follows that system \eqref{40} is superexponentially stabillizable to $\psi_1$.

\subsection{Dirichlet boundary conditions, variable coefficients}\label{ex5}
In this example, we analyze the superexponential stabilizability of a parabolic equation in divergence form with nonconstant coefficients in the second order term. 

Let $\Omega=(0,1)$, $X=L^2(\Omega)$ and consider the problem
\begin{equation}\label{nc52}
\left\{
\begin{array}{ll}
u_t(t,x)-((1+x)^2u_x(t,x))_x+p(t)\mu(x)u(t,x)=0&x\in\Omega,t>0\\
u(t,0)=0,\quad u(t,1)=0,&t>0\\
u(0,x)=u_0(x)&x\in\Omega
\end{array}
\right.
\end{equation}
where $p\in L^2(0,T)$ is the control and $\mu$ is a function in $H^2(\Omega)$ with some properties to be specified later.

We denote by $A$ the operator
$$A:D(A)\subset X\to X,\qquad Au=-((1+x)^2u_x)_x$$
where $D(A)=H^2\cap H^1_0(\Omega)$ and it is possible to prove that $A$ satisfies the properties in \eqref{ip}. The eigenvalues and eigenvectors of $A$ are computed as follows
$$\lambda_k=\frac{1}{4}+\left(\frac{k\pi}{\ln2}\right)^2,\qquad\varphi_k=\sqrt{\frac{2}{\ln 2}}(1+x)^{-1/2}\sin\left(\frac{k\pi}{\ln2 }\ln(1+x)\right).$$
The gap condition holds true because
$$\sqrt{\lambda_{k+1}}-\sqrt{\lambda_k}\geq \frac{\pi}{\ln 2},\quad\forall k\in\NN^*.$$

Now, we check the hypotheses on the operator $B\varphi=\mu\varphi$ needed to apply Theorem \ref{ta1}. We recall that we want to prove that:
\begin{itemize}
\item $\langle B\varphi_1,\varphi_k\rangle\neq 0$, for all $k\in\NN^*$,
\item there exists $\tau>0$ such that the series
\begin{equation}\label{series}
\sum_{k\in\NN^*}\frac{e^{-2\lambda_k \tau}}{|\langle B\varphi_1,\varphi_k\rangle|^2}
\end{equation}
is finite.
\end{itemize}
Let us compute the Fourier coefficients of $B\varphi_1$:
\begin{equation*}
\footnotesize{\begin{split}
\langle \mu&\varphi_1,\varphi_k\rangle=\sqrt{\frac{2}{\ln 2}}\int_0^1\mu(x)\varphi_1(x)(1+x)^{-1/2}\sin\left(\frac{k\pi}{\ln 2}\ln(1+x)\right) dx\\
&=\sqrt{\frac{2}{\ln 2}}\frac{\ln2}{k\pi}\left(-\left.\mu(x)\varphi_1(x)(1+x)^{1/2}\cos\left(\frac{k\pi}{\ln 2}\ln(1+x)\right)\right|^1_0+\right.\\
&\qquad\quad\quad\left.+\int_0^1\left(\mu(x)\varphi_1(x)(1+x)^{1/2}\right)'\cos\left(\frac{k\pi}{\ln 2}\ln(1+x)\right)dx\right)\\
&=\sqrt{\frac{2}{\ln 2}}\left(\frac{\ln2}{k\pi}\right)^2\left(\left(\mu(x)\varphi_1(x)(1+x)^{1/2}\right)'(1+x)\left.\sin\left(\frac{k\pi}{\ln 2}\ln(1+x)\right)\right|^1_0\right.+\\
&\qquad\quad\quad-\left.\int_0^1\left(\left(\mu(x)\varphi_1(x)(1+x)^{1/2}\right)'(1+x)\right)'\sin\left(\frac{k\pi}{\ln 2}\ln(1+x)\right)dx\right)\\
&=\sqrt{\frac{2}{\ln 2}}\left(\frac{\ln2}{k\pi}\right)^3\left(\left(\left(\mu(x)\varphi_1(x)(1+x)^{1/2}\right)'(1+x)\right)'(1+x)\left.\cos\left(\frac{k\pi}{\ln 2}\ln(1+x)\right)\right|^1_0\right.+\\
&\qquad\quad\quad-\left.\int_0^1\left(\left(\left(\mu(x)\varphi_1(x)(1+x)^{1/2}\right)'(1+x)\right)'(1+x)\right)'\cos\left(\frac{k\pi}{\ln 2}\ln(1+x)\right)dx\right)\\
&=\sqrt{\frac{2}{\ln 2}}\left(\frac{\ln2}{k\pi}\right)^3\left(\sqrt{\frac{2}{\ln 2}}\frac{2\pi}{\ln2}\left(-2\mu'(1)(-1)^k-\mu'(0)\right)+\right.\\
&\qquad\quad\quad\left.-\int_0^1\left(\left(\left(\mu(x)\varphi_1(x)(1+x)^{1/2}\right)'(1+x)\right)'(1+x)\right)'\cos\left(\frac{k\pi}{\ln 2}\ln(1+x)\right)dx\right)
\end{split}}
\end{equation*}
Observe that, for the same reason of Example \ref{ex1}, if $2\mu'(1)\pm\mu'(0)\neq0$ and $\langle \mu\varphi_1,\varphi_k\rangle\neq0$, $\forall k \in\NN^*$ then, there exists a constant $C>0$ such that $|\langle B_0\varphi,\varphi_k\rangle|$ is bounded from below by $C\lambda_k^{-3/2}$, for all $k\in\NN^*$. Thus, series \eqref{series} is finite for all $\tau>0$.

\begin{oss}
\emph{As an example of a function $\mu$ that verifies the lower bound $|\langle B\varphi,\varphi_k\rangle|\geq C\lambda_k^{-3/2}$, one can consider again $\mu(x)=x$: indeed, it satisfies the sufficient condition $2\mu'(1)\pm\mu'(0)\neq0$ and the Fourier coefficients of $B\varphi_1=x\varphi_1$ are all different from zero:}
\small{\begin{equation*}
\begin{split}
\langle x&\varphi_1,\varphi_k\rangle=\\
&=\left\{\begin{array}{ll}
\frac{2(2(-1)^{k+1}-1)}{(k^2-1)^2\left(1+\frac{(k+1)^2\pi^2}{(\ln 2)^2}\right)\left(1+\frac{(k-1)^2\pi^2}{(\ln 2)^2}\right)}\left(4k^3+k+1+2k(k^2-1)^2\frac{\pi}{(\ln 2)^2}\right),&k\geq2\\\\
\frac{1}{\ln 2}\left(\frac{(1-\ln2)\left(\frac{2\pi}{\ln2}\right)^3-\frac{2\pi}{\ln2}}{1+\left(\frac{2\pi}{\ln2}\right)^3}\right),&k=1
\end{array}\right.
\end{split}
\end{equation*}}
\end{oss}

This concludes the verification of the hypotheses of Theorem \ref{ta1}, that imply the superexponential stabilizability of \eqref{nc52} to $\psi_1$.

\subsection{$3D$ ball with radial data}\label{ex6}
In this example we consider an evolution equation in the three dimensional unit ball $B^3$ for radial data. The bilinear stabilizability problem is the following
\begin{equation}\label{51}
\left\{\begin{array}{ll}
u_t(t,r)-\Delta u(t,r)+p(t)\mu(r)u(t,r)=0 & r\in[0,1], t>0 \\
u(t,1)=0,&t>0\\
u(0,r)=u_0(r) & r\in[0,1]
\end{array}\right.
\end{equation}
where the Laplacian in polar coordinates for radial data has the form
$$\Delta\varphi(r)=\partial^2_r \varphi(r)+\frac{2}{r}\partial_r\varphi(r).$$
The function $\mu$ is a radial function as well in the space $H^3_r(B^3)$, where the spaces $H^k_r(B^3)$ are defined as follows
$$X:=L^2_{r}(B^3)=\left\{\varphi\in L^2(B^3)\,|\, \exists \psi:\RR\to\RR, \varphi(x)=\psi(|x|)\right\}$$
$$H^k_r(B^3):=H^k(B^3)\cap L^2_{r}(B^3) .$$

The domain of the Dirichlet Laplacian $A:=-\Delta$ in $X$ is $D(A)=H^2_{r}\cap H^1_0(B^3)$. We observe that $A$ satisfies the hypotheses required to apply Theorem \ref{ta1}. We denote by $\{\lambda_k\}_{k\in\NN^*}$ and $\{\varphi_k\}_{k\in\NN^*}$ the families of eigenvalues and eigenvectors of $A$, $A\varphi_k=\lambda_k\varphi_k$, namely
\begin{equation}\label{ee}\varphi_k=\frac{\sin(k\pi r)}{\sqrt{2\pi}r},\qquad\lambda_k=(k\pi)^2
\end{equation}
$\forall k\in\NN^*$, see \cite{leb}, section 8.14. The family $\{\varphi_k\}_{k\in\NN^*}$ forms an orthonormal basis of $X$.

In order to prove a superexponential stabilizability result to the trajectory $\psi_1$, we need to verify the remaining hypotheses in Theorem \ref{ta1} regarding the gap condition of the eigenvalues of $A$ and the properties of the operator $B:X\mapsto X$, $B\varphi=\mu\varphi$.

Since the Laplacian in the $3D$ ball for radial data behaves as a one dimensional operator, the analysis is very similar to the previous cases. Indeed, since the eigenvalues of the operator $A$ are actually the same of the $1D$ Dirichlet Laplacian, we have 
$$\sqrt{\lambda_{k+1}}-\sqrt{\lambda_k}=\pi,\quad\forall k\in\NN^*.$$

In order to compute a suitable lower bound for the Fourier coefficients of $B\varphi_1$, we recall the following property of radial symmetric functions $f=f(r)$: the integral over the unit ball $B^n\subset \RR^n$ of $f=f(r)$ reduces to
\begin{equation}
\int_{B^n}fdV=|S^{n-1}|\int_0^1 f(r)r^{n-1}dr
\end{equation}
where $|S^{n-1}|$ is the measure of the surface of the sphere $S^{n-1}$.

Therefore,
\begin{equation}\begin{split}
\langle \mu\varphi_1,\varphi_k\rangle&=\int_{B^3}\frac{1}{2\pi}\mu(r)\frac{\sin(\pi r)}{r}\frac{\sin(k\pi r)}{r}dV\\
&=4\pi \int_0^1 \frac{1}{2\pi} \mu(r)\frac{\sin(\pi r)}{r}\frac{\sin(k\pi r)}{r}r^2dr\\
&=\int_0^1 2\mu(r)\sin(\pi r)\sin(k\pi r)dr\\
&=-\frac{4}{k^3\pi^2}\left(\mu'(1)(-1)^k+\mu'(0)\right)+\\
&\quad-\frac{2}{(k\pi)^3}\int_0^1\left(\mu(r)\sin(\pi r)\right)'''\cos(k\pi r)dr.
\end{split}
\end{equation}
Following the same argument as in Example \ref{ex1}, if all the coefficients $\langle \mu\varphi_1,\varphi_k\rangle$ are different from zero and, moreover, $\mu'(1)\pm\mu'(0)\neq 0$ then, there exists a constant $C>0$ such that
\begin{equation*}
|\langle \mu\varphi_1,\varphi_k\rangle|\geq C\lambda_k^{-3/2},\qquad\forall k \in \NN^*
\end{equation*}
and thus the series in \eqref{series} is finite also in this case, for all $\tau>0$.

\begin{oss}
\emph{An example of a function $\mu\in H^3_{r}(B^3)$ with the aforementioned properties is $\mu(r)=r^2$. In this case the Fourier coefficients of $B\varphi_1$ are defined by}
\begin{equation*}
\langle B\varphi_1,\varphi_k\rangle=\left\{\begin{array}{ll}
\frac{8(-1)^{k+1}k}{(k^2-1)^2\pi^2},&k\geq2\\\\
\frac{2\pi^2-3}{6\pi^2},&k=1
\end{array}\right.
\end{equation*}
\end{oss}

Finally, applying Theorem \ref{ta1}, we deduce that, fixed $T>0$, there exist constants $M_T,\omega_T>0$ such that, for all $\rho>0$, there exists $R_\rho>0$ such that, if the initial condition $u_0$ satisfies $||u_0-\varphi_1||\leq R_\rho$, then
\begin{equation*}
||u(t)-\psi_1(t)||\leq M_Te^{-(\rho e^{\omega_T t}+\pi^2t)},\qquad\forall t>0.
\end{equation*}
\section*{References}
\bibliographystyle{plain}
\bibliography{biblio}
\end{document}